\theoremstyle{plain}
\newtheorem{thm}{Theorem}[section]
\newtheorem{lemma}[thm]{Lemma}
\newtheorem{prop}[thm]{Proposition}
\theoremstyle{definition}
\newtheorem{rmk}[thm]{Remark}
\newtheorem{example}[thm]{Example}
\def\dim{\mathop{\hbox {dim}}\nolimits}
\def\im{\mathop{\hbox {Im}}\nolimits}
\def\ker{\mathop{\hbox{Ker}}\nolimits}
\newcommand{\fra}{\mathfrak{a}}
\newcommand{\frb}{\mathfrak{b}}
\newcommand{\frg}{\mathfrak{g}}
\newcommand{\frh}{\mathfrak{h}}
\newcommand{\frk}{\mathfrak{k}}
\newcommand{\frp}{\mathfrak{p}}
\newcommand{\frt}{\mathfrak{t}}
\newcommand{\bbC}{\mathbb{C}}
\newcommand{\bbR}{\mathbb{R}}
\newcommand{\bbZ}{\mathbb{Z}}
\newcommand{\caC}{\mathcal{C}}
\begin{document}
	
	\title{Dirac series of $E_{7(7)}$}
	
	\author{Yi-Hao Ding}
	\address[Ding]{School of Mathematical Sciences, Soochow University, Suzhou 215006,
		P.~R.~China}
	\email{435025738@qq.com}

	\author{Chao-Ping Dong}
	\address[Dong]{School of Mathematical Sciences, Soochow University, Suzhou 215006,
		P.~R.~China}
	\email{chaopindong@163.com}

   \author{Lin Wei}
	\address[Wei]{School of Mathematical Sciences, Soochow University, Suzhou 215006,
		P.~R.~China}
	\email{1678728903@qq.com}

\abstract{This paper classifies all the Dirac series (that is, irreducible unitary representations having non-zero Dirac cohomology) of $E_{7(7)}$. Enhancing the Helgason-Johnson bound in 1969 for the group $E_{7(7)}$ is one key ingredient. Our calculation partially supports Vogan's fundamental parallelepiped (FPP) conjecture. As applications, when passing to Dirac index, we continue to find cancellation between the even part and the odd part of Dirac cohomology. Moreover, for the first time, we find Dirac series whose spin lowest $K$-types have multiplicities.}

\endabstract

\subjclass[2010]{Primary 22E46}
	
	\keywords{Dirac cohomology, FPP conjecture, multiplicities, spin lowest $K$-type}
	
	\maketitle

\section{Introduction}\label{sec-introduction}
This paper is a continuation of our study on the classification of \emph{Dirac series} (that is, irreducible unitary representations with non-zero Dirac cohomology) for  exceptional real reductive Lie groups \cite{DD,DDH,DDL,DDY}. The group that we shall focus on here is the linear split $E_7$.

Let us embark with necessary notations and background. Let $G$ be a connected simple Lie group with finite center. Let $\theta$ be the Cartan involution of $G$. Then $K:=G^{\theta}$ is a maximal compact subgroup of $G$, and
$$
\frg_0=\frk_0 \oplus \frp_0
$$
is the Cartan decomposition on the Lie algebra level. Let $T_f$ be a maximal torus of $K$. Let $\frg_0$, $\frk_0$, $\frt_{f, 0}$ be the Lie algebra of $G$, $K$, $T_f$, respectively. Let $\fra_{f, 0}=Z_{\frp_0}(\frt_{f, 0})$.
Then $\frh_{f, 0}=\frt_{f, 0}+\fra_{f, 0}$ is a maximally compact Cartan subalgebra of $\frg_0$. We shall drop the subscripts to stand for the corresponding complexified Lie algebras.

A fundamental problem in representation theory of Lie groups is classification of the \emph{unitary dual} $\widehat{G}$ of $G$. That is, the equivalence classes of all irreducible unitary representations. In the 1970s, Langlands classified the admissible dual $\widehat{G}_{\rm admi}$ of $G$.   Later, Knapp and Zuckerman classified the Hermitian dual $\widehat{G}_{\rm Herm}$ of $G$. It consists of members of  $\widehat{G}_{\rm admi}$ carrying an invariant Hermitian form. If  the Hermitian form is further required to be positive definite, then we come to the unitary dual $\widehat{G}$. Namely, we have
\begin{equation}\label{three-containment}
\widehat{G}\subset \widehat{G}_{\rm Herm} \subset \widehat{G}_{\rm admi}.
\end{equation}

Passing from the Hermitian dual to the unitary dual, one actually needs an effective way to rule out the great many non-unitary representations. Parthasarathy's Dirac operator inequality is a valuable tool on this aspect.
Fix a non-degenerate invariant bilinear form $B(\cdot,  \cdot)$ of $\frg$, which is positive definite on $\frp_0$ and negative definite on $\frk_0$. We also use the same symbol to denote its restrictions to other Lie subalgebras. Let $Z_1, \dots, Z_n$ be an orthonormal basis of $\frp_0$ with respect to $B(\cdot,  \cdot)$. Let $U(\frg)$ be the universal enveloping algebra of $\frg$. Let $C(\frp)$ be the Clifford algebra of $\frp$ with respect to $B(\cdot,  \cdot)$ .  Then the \emph{Dirac operator} introduced by Parthasarathy \cite{Pa1} is
$$
		D:=\sum_{i=1}^{n} Z_i\otimes Z_i\in U(\frg)\otimes C(\frp).
$$
This operator is well-defined: it does not depend on the choice of the orthogonal basis $\{Z_i\}_{i=1}^{n}$. Moreover, $D^2$ is a natural Laplacian,  and writing out the details carefully (which can be found for instance in [HP2]) will lead to Parthasarathy's Dirac operator inequality \cite{Pa2}:
Let $\pi$ be an irreducible unitary $(\frg, K)$ module with infinitesimal character $\Lambda$, then
\begin{equation}\label{Dirac-inequality-original}
	\|\gamma+\rho_c\|\geq \|\Lambda\|.
\end{equation}
Here $S_G$ is a spin module for the Clifford algebra $C(\frp)$, and $\gamma$ is the highest weight of any $\widetilde{K}$-type occurring in $\pi \otimes S_G$, and
    $$
    \widetilde{K}=\{(k,s) \in K \times \text{Spin}(\frp_0) : Ad(k)= p(s) \}
    $$
with $p: {\rm Spin}(\frp_0)\to {\rm SO}(\frp_0)$ being the universal double covering map.
In practice, whenever one can cleverly find a $\widetilde{K}$-type $\gamma$ such that \eqref{Dirac-inequality-original}
fails, one can immediately conclude that $\pi$ is \emph{not} unitary. This powerful inequality will be rephrased in Section \ref{sec-PRV-spin-DI} using the notion of spin norm, and when combined with Vogan pencil (see Section \ref{sec-pencil}), it will be more effective.

Let $\pi$ be a $(\frg, K)$-module, then $\pi \otimes S_G$ is a $(U(\frg) \otimes C(\frp), \widetilde{K})$-module, where $\widetilde{K}$ acts on $\pi$ through $K$ and acts on $S_G$ through Spin($\frp_0$). The Dirac operator $D\in U(\frg) \otimes C(\frp)$ acts on $\pi\otimes S_G$ in the obvious way. To further sharpen the Dirac inequality,  Vogan introduced Dirac cohomology of $\pi$ \cite{Vog97} as
\begin{equation}\label{Dirac-cohomology}
		H_D(\pi)=\ker D/\ker D \cap {\rm Im}\, D.
\end{equation}
Note that when $\pi$ is unitary, the case that we care the most, $\ker D$ and $\im D$ intersect trivially and the Dirac cohomology of $\pi$ becomes $\ker D=\ker D^2$.
	
	 Vogan conjectured that the Dirac cohomology, if nonzero, should refine the infinitesimal character.  The conjecture was proven by Huang and Pand\v zi\'c in 2002 \cite{HP}.
	
\begin{thm}{\rm (Theorem 2.3 of \cite{HP})}\label{thm-HP}
Let $\pi$ be an irreducible ($\frg$, $K$) module with infinitesimal character $\Lambda$.
Assume that $H_D(\pi)\neq 0$, and let $\gamma\in\frt_f^{*}\subset\frh_f^{*}$ be the highest weight of any $\widetilde{K}$-type in it. Then $\Lambda=w(\gamma+\rho_{c})$ for some element $w\in W(\frg,\frh_f)$.
\end{thm}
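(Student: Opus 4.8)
The plan is to prove the statement that has become Theorem~\ref{thm-HP}, namely the Huang--Pand\v zi\'c resolution of Vogan's conjecture. I would follow the now-standard algebraic strategy built around the fact that $D^2$ differs from a central element only by a ``$\widetilde K$-Casimir'' term.

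\textbf{Step 1: An algebraic formula for $D^2$.} First I would recall the identity
\[
D^2 = -\bigl(\Omega_{\frg}\otimes 1\bigr) + \bigl(\Omega_{\frk_\Delta}\bigr) + (\|\rho_c\|^2 - \|\rho\|^2)\,(1\otimes 1),
\]
where $\Omega_{\frg}$ is the Casimir element of $U(\frg)$, $\rho$ is the half sum of positive roots of $\frh_f$ in $\frg$, and $\Omega_{\frk_\Delta}$ is the Casimir of the diagonally embedded $\frk$ inside $U(\frg)\otimes C(\frp)$, built from the map $\alpha_\Delta\colon \frk\to U(\frg)\otimes C(\frp)$, $X\mapsto X\otimes 1 + 1\otimes\beta(X)$, with $\beta\colon \frk\to C(\frp)$ coming from the adjoint action on $\frp$ and the identification $\mathfrak{so}(\frp)\cong\Lambda^2\frp\subset C(\frp)$. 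This is a direct computation in $U(\frg)\otimes C(\frp)$ (Parthasarathy; see also \cite{Pa1,Pa2} and the exposition referenced as [HP2]).

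\textbf{Step 2: Reduction to an algebra homomorphism on cohomology.} The key structural input is that there is a homomorphism of algebras $\zeta\colon Z(\frg)\to Z(\frk_\Delta)\cong Z(\frk)$ such that, for every $(\frg,K)$-module $\pi$, each $z\in Z(\frg)$ acting on $\pi\otimes S_G$ agrees with $\zeta(z)$ modulo $D\cdot(U(\frg)\otimes C(\frp)) + (U(\frg)\otimes C(\frp))\cdot D$; consequently $z$ and $\zeta(z)$ act by the \emph{same} scalar on $H_D(\pi)$ whenever the latter is nonzero. Establishing the existence of $\zeta$ and this ``acting the same on cohomology'' property is the heart of \cite{HP}; it rests on the $D^2$ formula of Step~1 together with an analysis of the superalgebra structure and the fact that $D$ is $\widetilde K$-invariant and (super)anticommutes appropriately. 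I would present this as the main technical proposition to be invoked.

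\textbf{Step 3: Translating scalars into Weyl-group conjugacy.} Now suppose $H_D(\pi)\neq 0$ and let $\gamma$ be the highest weight of a $\widetilde K$-type occurring in $H_D(\pi)$; then $\gamma+\rho_c$ is, via Harish-Chandra for $\frk$, the infinitesimal character data of that $\widetilde K$-type. On the one hand, $z\in Z(\frg)$ acts on $\pi\otimes S_G$ (hence on $H_D(\pi)$) by the scalar $\chi_\Lambda(z)$ attached to the infinitesimal character $\Lambda$ of $\pi$. On the other hand, by Step~2 it acts by the $Z(\frk)$-scalar attached to $\gamma+\rho_c$, which after applying Harish-Chandra's construction of $\zeta$ equals $\chi_{\gamma+\rho_c}(z)$ computed in $\frg$. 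Hence $\chi_\Lambda = \chi_{\gamma+\rho_c}$ as characters of $Z(\frg)$, and by Harish-Chandra's theorem this forces $\Lambda = w(\gamma+\rho_c)$ for some $w\in W(\frg,\frh_f)$. The compatibility that $\zeta$ is exactly the ``shift by $\rho$ minus $\rho_c$'' map on Harish-Chandra parameters — equivalently that $\zeta$ is dual to restriction $\frt_f^*\hookrightarrow\frh_f^*$ composed with the appropriate affine identifications — is what makes $\gamma+\rho_c$ (rather than $\gamma$ alone) the correct parameter; I would verify this by evaluating $\zeta$ on the Casimir using the Step~1 formula, which fixes the normalization.

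\textbf{Main obstacle.} The genuine difficulty is Step~2: constructing $\zeta$ and proving that $z$ and $\zeta(z)$ induce the same operator on $H_D(\pi)$ for \emph{all} $z\in Z(\frg)$, not merely the Casimir. For the Casimir it is immediate from Step~1 (the difference is literally $D^2$ up to a constant, which kills $H_D$). For higher-degree generators one needs the full argument of Huang--Pand\v zi\'c, which exploits that $H_D$ can be computed as the cohomology of an explicit differential on $U(\frg)\otimes C(\frp)$ and that this cohomology is isomorphic, as a $Z(\frg)$-algebra, to $Z(\frk)$ with the twisted action; alternatively one uses Kostant's cubic Dirac operator formalism. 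I would cite \cite{HP} for this core step and devote the remainder of the proof to Steps~1 and~3, which are self-contained given the $D^2$ identity.
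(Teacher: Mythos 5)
The paper does not prove this statement at all: it is quoted verbatim as Theorem 2.3 of \cite{HP}, with the proof deferred entirely to Huang--Pand\v zi\'c. Your outline correctly reproduces their argument — the Parthasarathy formula for $D^2$, the homomorphism $\zeta\colon Z(\frg)\to Z(\frk)$ with $z\otimes 1-\zeta(z)\in D\,(U(\frg)\otimes C(\frp))^K+(U(\frg)\otimes C(\frp))^K\,D$, and the Harish-Chandra translation to $\Lambda=w(\gamma+\rho_c)$ — and you rightly isolate Step~2 as the genuine content and cite \cite{HP} for it rather than reproving it, which is consistent with the paper's own treatment.
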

	
In the above theorem, we regard $\frt_f^*$ as a subspace of $\frh_f^*$ by extending the linear functionals on $\frt_f$ to be zero on $\fra_f$. Theorem \ref{thm-HP} says that Dirac cohomology, whenever non-zero, is a finer invariant of $\pi$ than infinitesimal character.  Let us denote by $\widehat{G}^d$ the Dirac series of $G$. Now conceptually, we have
\begin{equation}\label{four-containment}
\widehat{G}^d \subset\widehat{G}\subset \widehat{G}_{\rm Herm} \subset \widehat{G}_{\rm admi}.
\end{equation}
As explained above (see also Section \ref{sec-PRV-spin-DI} for details), Dirac series are members of $\widehat{G}$ such that Dirac inequality holds on certain $\widetilde{K}$-types $\gamma$ of $\pi\otimes S_G$.
Therefore, we may expect that $\widehat{G}^d$ cut out an interesting part of $\widehat{G}$. On the other hand, when $G$ is fixed, without knowing the entire unitary dual of $G$, Theorem A of \cite{D17} gives an algorithm to pin down $\widehat{G}^d$ via a finite calculation. Understanding $\widehat{G}^d$ will then help people to approximate $\widehat{G}$ from its interior.

The above are the motivations for us to classify $\widehat{G}^d$ for exceptional Lie groups.
One tool that we shall use is the software \texttt{atlas} \cite{At}, which detects unitarity based on the algorithm of Adams, van Leeuwen, Trapa and Vogan \cite{ALTV}. Firstly, let us briefly introduce it. Let $G(\bbC)$ be a complex connected  simple algebraic group with finite center. Let $\sigma$ be a \emph{real form} of $G(\bbC)$. That is $\sigma$ is an antiholomorphic Lie group automorphism of $G(\bbC)$ and $\sigma^2$=Id. Let $G=G(\bbC)^{\sigma}$. Let $\theta$ be the involutive algebraic automorphism of $G(\bbC)$ corresponding to $\sigma$ via the Cartan theorem (see for instance Theorem 3.2 of \cite{ALTV}). Let $K(\bbC):=G(\bbC)^{\theta}$. Let $H(\bbC)$ be a maximal torus of $G(\bbC)$. Its \emph{character lattice} is the group of algebraic homomorphisms
$$
X^*:=\text{Hom}_{alg}(H(\bbC),\bbC^{\times}).
$$
Choose a Borel subgroup $B(\bbC) \supset H(\bbC)$. Let $\frb$ (resp., $\frh$) be the Lie algebra of $B(\bbC)$ (resp., $H(\bbC)$). Let $l=\dim_{\bbC} \frh$ and let $0, 1, \dots, l-1$ be a labelling of the simple roots for $\Delta(\frb, \frh)$.

In \texttt{atlas}, an irreducible $(\frg, K)$ module $\pi$ is parameterized by a \emph{final} parameter $p=(x,\lambda,\nu)$ via the Langlands classification, where $x$ is a $K(\bbC)$-orbit of the Borel variety $G(\bbC)/B(\bbC)$, $\lambda \in X^*+\rho$ and $\nu \in (X^*)^{-\theta} \otimes_{\bbZ} \bbC$. The support of $x$ is a subset of $[0, 1, \dots, l-1]$ and it is given by the \texttt{atlas} command \texttt{support(x)}.  The Cartan involution $\theta$ now becomes $\theta_x$, and the latter is given by the command \texttt{involution(x)}. We say the representation $\pi$ is \emph{fully supported} if \texttt{support(x)} contains all the simple roots.
In this setting, the infinitesimal character of $\pi$ is
\begin{equation}\label{inf-char}
	\frac{1}{2}(1+\theta)\lambda +\nu \in\frh^*
\end{equation}
All the finitely many irreducible $(\frg, K)$ modules with infinitesimal character \texttt{Lambda} can be constructed via the command
\begin{verbatim}
	set all=all_parameters_gamma(G, Lambda)
\end{verbatim}
\texttt{atlas} lists these representations according to cardinality of their supports. We also adopt this smart way to organize the Dirac series of $G$. In particular, we divide $\widehat{G}^d$ into two parts: those which are fully supported, and those which are not. The first part is finite and we call them \emph{FS-scattered} representations. The second part is infinite, but can be arranged into finitely many \emph{strings}, with the representations in each string having the same KGB element $x$ and the same parameter $\nu$. We will recall the relevant aspects of \texttt{atlas} whenever needed.

Our main result is stated as follows.

\begin{thm}\label{thm-main}
The set $\widehat{E_{7(7)}}^d$ consists of $125$ FS-scattered representations whose spin lowest $K$-types are u-small, and $2057$ strings of representations.
\end{thm}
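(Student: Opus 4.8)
\emph{Sketch of proof.} The plan is to convert the classification into a finite computation — the existence of such a procedure being guaranteed by Theorem~A of \cite{D17} — driven by Theorem \ref{thm-HP}, and then to execute it with \texttt{atlas}. Since $E_{7(7)}$ has equal rank ($\frk\cong\frsu(8)$, so $\frh_f=\frt_f$ is a compact Cartan and discrete series exist), for every Dirac series $\pi$ with infinitesimal character $\Lambda$ we have $\Lambda=w(\gamma+\rho_c)$ with $\gamma\in\frt_f^*$ a $\frk$-dominant weight of a $K$-type; as $\gamma+\rho_c$ is real, $\Lambda$ is $W(\frg,\frh_f)$-conjugate to a real dominant element, which must lie in the fixed set $W(\frg,\frh_f)\cdot(\rho_c+P)$, where $P$ is the weight lattice of $K$. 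The spin reformulation of \eqref{Dirac-inequality-original} from Section \ref{sec-PRV-spin-DI} moreover forces $\|\Lambda\|=\|\gamma+\rho_c\|$ on a spin lowest $K$-type of $\pi$, so a good upper bound on $\|\Lambda\|$ will leave only finitely many candidate infinitesimal characters.

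Producing such a bound is the crux, and the 1969 Helgason--Johnson inequality is not enough: it bounds the continuous Langlands parameter $\nu$, hence $\|\Lambda\|$, but for $E_7$ the resulting region still contains far too many lattice points to scan. I would therefore first establish an enhanced estimate tailored to $E_{7(7)}$, sharpening Helgason--Johnson by combining the explicit $E_7$ root data with unitarity and, for Dirac series, with the constraint $\Lambda\in\rho_c+P$ modulo $W$. Feeding this into the spin Dirac inequality, and using the Vogan-pencil mechanism of Section \ref{sec-pencil} to eliminate entire rays of parameters simultaneously, should reduce the candidate real dominant $\Lambda$'s to an explicit finite list.

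The remaining work is then uniform bookkeeping. For each candidate $\Lambda$, run \texttt{set all=all\_parameters\_gamma(G, Lambda)} to produce every irreducible $(\frg,K)$-module with that infinitesimal character; discard the non-unitary ones via the \texttt{atlas} implementation of the ALTV algorithm \cite{ALTV}; and for each surviving $\pi$ decide whether $H_D(\pi)\neq 0$ by testing, over the lowest $\widetilde K$-types of $\pi\otimes S_G$, whether the spin norm attains $\|\Lambda\|$, i.e.\ whether $\Lambda\in W(\frg,\frh_f)(\gamma+\rho_c)$ for the relevant $\gamma$. Sorting the resulting Dirac series by the cardinality of \texttt{support(x)} isolates the fully supported ones, and I expect exactly $125$ such FS-scattered representations; for these I would additionally verify directly that every spin lowest $K$-type is u-small and record that the associated highest weights lie inside Vogan's fundamental parallelepiped, which is the sense in which this computation supports the FPP conjecture.

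Finally, for the non-fully-supported Dirac series the plan is to show they organize into strings. Fixing a KGB element $x$ and a parameter $\nu$ whose support omits some simple root, the admissible $\lambda$ run through an arithmetic progression (a Vogan pencil); using the behaviour of Dirac cohomology and of the spin norm along such a pencil, one shows that past a finite initial segment the Dirac series property propagates along the progression, so the Dirac series in it form a string in the sense of the introduction. Enumerating the finitely many relevant pairs $(x,\nu)$ — finite, again, by the enhanced bound — and counting the strings should yield $2057$. The principal obstacle throughout is feasibility: without an honestly sharper estimate than Helgason--Johnson the $E_7$ search is intractable, and even with it the \texttt{atlas} runs must be organized carefully — by support, by Cartan subgroup, by pencil — to keep the full enumeration finite and independently checkable.
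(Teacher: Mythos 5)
Your plan is essentially the paper's own strategy: sharpen the Helgason--Johnson bound for $E_{7(7)}$, combine it with the spin-norm form of the Dirac inequality and Vogan pencils to cut the candidate integral infinitesimal characters down to a finite list, run the \texttt{atlas} unitarity and Dirac-cohomology tests on each, and separate the result by support into the $125$ FS-scattered members and the strings, which are counted by the propagation-along-strings machinery of \cite{D21}. The only substantive detail you leave implicit is that the string count rests on verifying Conjecture~2.6 of \cite{D21} and the binary condition for each non-fully-supported KGB support, but that is exactly the finite verification you describe.
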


The notion of u-small $K$-type, due to Salamanca-Riba and Vogan \cite{SV}, will be recalled in Section \ref{sec-u-small}.
One key ingredient leading to Theorem \ref{thm-main} is a further improvement of the Helgason-Johnson bound \cite{HJ} for $E_{7(7)}$. See Proposition \ref{prop-HJ}. It is worth mentioning that, unlike all the previously known cases, on the group $E_{7(7)}$, we find Dirac series whose spin lowest $K$-types have multiplicities. This should be the first example in the literature. See Remark \ref{rmk-mult} for more.

The paper is organized as follows: Section \ref{sec-EV-structure} recalls the basic structure of $E_{7(7)}$ and necessary preliminaries. Section \ref{sec-HJ} further sharpens the Helgason-Johnson bound for $E_{7(7)}$.  We classify the Dirac series of $E_{7(7)}$ in Section \ref{sec-EV-ds}, and study the Dirac index of some FS-scattered members in Section \ref{sec-EV-DI}. Section \ref{sec-two-ds} carefully examines the multiplicities of the spin LKTs of two FS-scattered representations. Section \ref{sec-appendix} is an appendix presenting all the FS-scattered representations according to their infinitesimal characters.

\section{Basic structure of $E_{7(7)}$}\label{sec-EV-structure}
We continue with the notation of the previous section. From now on, we fix $G$ as the connected simple real exceptional linear Lie group \texttt{E7\_s} in \texttt{atlas}. It has center $\bbZ/2\bbZ$. The group $G$ is not simply connected. This can be explained as follows:
\begin{verbatim}
G:E7_s
set K=K_0(G)
K
Value: compact connected real group with Lie algebra 'su(8)'
print_Z(K)
Group is semisimple
center=Z/4Z
\end{verbatim}
The output says that $K$ has center $\bbZ/4\bbZ$. Therefore, its universal covering group $\widetilde{K}\cong SU(8)$, which has center $\bbZ/8\bbZ$, is a double cover of $K$. Using the Cartan decomposition $G=K \exp(\frp_0)$, we know that the universal covering group $\widetilde{G}$ is actually a double cover of $G$. Classifying the Dirac series of $\widetilde{G}$ is a valuable project, since it will offer us some genuine representations of $\widetilde{G}$. However, since \texttt{atlas} can not handle non-linear groups at the current stage, let us leave it to the future.

Note that $G$ is equal rank. That is, $\frh_f=\frt_f$. The Lie algebra $\frg_0$ of $G$ is denoted as \texttt{EV} in \cite[Appendix C]{Kn}.
	Note that
	$$
	-\dim \frk +\dim \frp=-63+70=7.
	$$
	Therefore, the group $G$ is also called $E_{7(7)}$ in the literature.
	
Let $\Delta(\frg,\frt_f)$, $\Delta(\frk,\frt_f)$ be the root systems. As usual, let $W(\frg,\frt_f)$, $W(\frk,\frt_f)$ be the corresponding Weyl groups.
	
We fix a Vogan diagram for $\frg_0$ in Figure \ref{V diagram}. By doing this, we have actually fixed a positive root system $\Delta^+(\frg,\frh_f)=(\Delta^+)^{(0)}(\frg,\frh_f)$ for $\Delta(\frg,\frh_f)$, with the corresponding simple roots $\alpha_1=\frac{1}{2}(1, -1,-1,-1,-1,-1,-1,1)$, $\alpha_2=e_1+e_2$ and $\alpha_i=e_{i-1}-e_{i-2}$ for $3\leq i\leq 7$. Let $\zeta_1, \dots, \zeta_7$ be the corresponding fundamental weights. We will use them as a basis to express the \texttt{atlas} parameters $\lambda$, $\nu$ and the infinitesimal character. More precisely, in such cases, $[a, b, c, d, e, f, g]$ stands for the vector $a\zeta_1+\cdots+ g \zeta_7$.

	\begin{figure}[h]
		\begin{center}
			\includegraphics[scale=0.6]{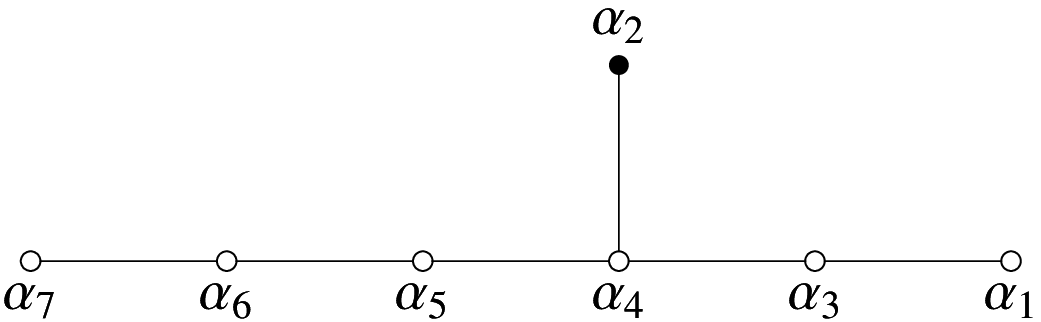}
		\end{center}
		\caption{The Vogan diagram for EV}
		\label{V diagram}
	\end{figure}

We fix a Dynkin diagram for $\Delta^+(\frk, \frt_f)$ in Figure \ref{D diagram}, with the simple roots being $\gamma_1=\alpha_1$, $\gamma_i=\alpha_{i+1}$ for $2 \leq i \leq 6$ and $\gamma_7=\alpha_1+2\alpha_2+2\alpha_3+3\alpha_4+2\alpha_5+\alpha_6$. Let $\varpi_1, \dots, \varpi_7\in \frt_f^*$ be the corresponding fundamental weights.

	\begin{figure}[h]
		\begin{center}
			\includegraphics[scale=0.6]{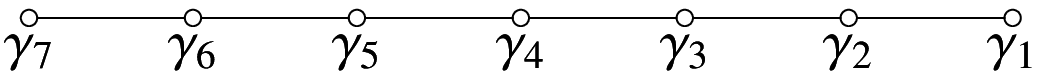}
		\end{center}
		\caption{The Dynkin diagram for $\Delta^+(\frk,\frt_f)$}
		\label{D diagram}
	\end{figure}
	
\subsection{Positive root systems and cones}
Including $(\Delta^{+})^{(0)}(\frg,\frt_f)$, there are $72$ choices of the positive root system of $\Delta(\frg,\frt_f)$ containing the fixed $\Delta^{+}(\frk,\frt_f)$. We enumerate them as
$$
(\Delta^{+})^{(j)}(\frg,\frt_f)=\Delta^{+}(\frk,\frt_f) \cup (\Delta^{+})^{(j)}(\frp, \frt_f),
\quad 0 \leq j \leq 71.
$$
Let us denote by $\rho^{(j)}$ (resp., $\rho_n^{(j)}$, $\rho_c$) the half sum of roots in
	 $(\Delta^{+})^{(j)}(\frg, \frt_f)$ (resp., $(\Delta^+)^{(j)}(\frp, \frt_f)$, $\Delta^+(\frk, \frt_f)$). Then we have
	 $$
	 \rho^{(j)}=\rho_n^{(j)}+\rho_c.
	 $$
Let $w^{(j)}$ be the unique element in $W(\frg,\frt_f)$ such that $w^{(j)}\rho^{(0)}=\rho^{(j)}$. Then, for  $0\leq j\leq 71$, $w^{(j)}\alpha_1, \dots, w^{(j)}\alpha_7$ are the simple roots of $(\Delta^+)^{(j)}(\frg, \frt_f)$, with $w^{(j)}\zeta_1, \dots, w^{(j)}\zeta_7$ being the corresponding fundamental weights. Denote by
	 $$
     W(\frg, \frt_f)^1=\{w^{(j)}\mid 0\leq j\leq 71\}.
     $$
	
Let $\mathcal{C}_{\frg}^{(j)}$ (resp., $\mathcal{C}_{\frk}$) be the dominant Weyl chamber corresponding to $(\Delta^{+})^{(j)}(\frg, \frt_f)$ (resp., $\Delta^{+}(\frk, \frt_f)$). Then $w^{(j)}\mathcal{C}_{\frg}^{(0)}=\mathcal{C}_{\frg}^{(j)}$, $\mathcal{C}_{\frk}=\bigcup_{j=0}^{71}\mathcal{C}_{\frg}^{(j)}$, and
$$
W(\frg, \frt_f)^1=\{ w \in W(\frg,\frt_f)| w(\mathcal{C}_{\frg}) \subseteq \mathcal{C}_{\frk} \}.
$$
By Kostant \cite{Ko}, the multiplication map gives a bijection from $W(\frg, \frt_f)^1\times W(\frk, \frt_f)$ onto $W(\frg, \frt_f)$. In particular,
	$$
	|W(\frg, \frt_f)^1|=\frac{|W(\frg, \frt_f)|}{|W(\frk, \frt_f)|}=\frac{2903040}{40320}=72.
	$$

\subsection{$\frk$-types and $K$-types}\label{sec-k-type-K-type}
By the highest weight theorem, any irreducible $\frk$-module ($\frk$-\emph{type} for short henceforth) can be parameterized by its highest weight $\mu=a \varpi_1+b \varpi_2 + c \varpi_3+d \varpi_4 +e \varpi_5+ f\varpi_6+ g\varpi_7$, where the coefficients $a, \dots, g$ run over non-negative integers. For simplicity, we shall denote this $\frk$-type by $E_{\mu}$, and denote $\mu$ as $[a, b, c, d, e, f, g]$. Let \begin{equation}\label{beta}
\beta=\alpha_1+\alpha_2+2\alpha_3+3\alpha_4+3\alpha_5+2\alpha_6+\alpha_7=[0,0,0,1,0,0,0].
\end{equation}
Then $\frp\cong E_{\beta}$ as $\frk$-modules. The $\frk$-types $E_{[a,b,c,d,e,f,g]}$ and $E_{[g,f,e,d,c,b,a]}$ are contragredient to each other.

Note that a $\frk$-type $E_{[a, b, c, d, e, f, g]}$ is  a $K$-type if and only if
$a+c+e+g$ is even. For instance,
\begin{equation}\label{rhoc}
\rho_c=(-\frac{5}{2},-\frac{3}{2},-\frac{1}{2},\frac{1}{2},\frac{3}{2},\frac{5}{2},-\frac{7}{2},\frac{7}{2})=[1, 1, 1, 1, 1, 1, 1]
\end{equation}
is the highest weight of a $K$-type. Note that
\begin{equation}\label{rhoc-g}
\rho_c=\zeta_1-4\zeta_2+ \zeta_3+ \zeta_4+  \zeta_5+ \zeta_6 +\zeta_7.
\end{equation}
	
Later, we shall often need to shift the \texttt{atlas} coordinates of a $K$-type to the coordinates using the fundamental weights $\varpi_1, \dots, \varpi_7$ as a basis. As learned from Vogan, there is a clever choice of the KGB element which makes the job much easier. Let us illustrate this for \texttt{E7\_s} (certain outputs are omitted).
	\begin{verbatim}
	G:E7_s
	void: for x in distinguished_fiber(G) do prints(x," ",rho_c(x)) od
	KGB element #0 [1, 0, 0, 1, 0, 1, 0]/1
	KGB element #1 [-1, 0, 1, 1, 0, 1, 0]/1
	......
	KGB element #70 [1, 4, 1, -3, 1, 1, 1]/1
	KGB element #71 [1, -4, 1, 1, 1, 1, 1]/1
\end{verbatim}
Therefore, as suggested by \eqref{rhoc-g}, we will choose \texttt{KGB(G,71)} to print $K$-types. Then, if a $K$-type has \texttt{atlas} coordinates $(y_1, y_2, y_3, y_4, y_5, y_6, y_7)$, its coordinates in terms of $\varpi_1, \dots, \varpi_7$ are
\begin{equation}\label{K-coords-shift}
[y_1, y_3, y_4, y_5, y_6, y_7, y_1 + 2 y_2 + 2 y_3 + 3 y_4 + 2 y_5 + y_6].
\end{equation}

Let us see an example.
\begin{verbatim}
set p=parameter(KGB(G,20925),[1,1,1,1,1,1,1]/1,[1,1,1,0,1,1,1]/1)
height(p)
Value: 0
print_branch_irr(p,KGB(G,71),100)
(1+0s)*(KGB element #71,[0, 0, 0, 0, 0, 0, 0])
(1+0s)*(KGB element #71,[0, -1, 0, 0, 1, 0, 0])
(1+0s)*(KGB element #71,[0, -2, 0, 0, 2, 0, 0])
(1+0s)*(KGB element #71,[0, -3, 0, 0, 3, 0, 0])
\end{verbatim}
By \eqref{K-coords-shift}, the above four  $K$-types are  $[0,0,0,n,0,0,0]=n\beta$, where $0\leq n\leq 3$.

\subsection{Lambda norm, lowest $K$-types and infinitesimal character}\label{sec-LKT}
Fix a $K$-type $\mu$. Choose an index $0\leq j\leq 71$ such that $\mu+2\rho_c\in \caC_{\frg}^{(j)}$.  Define
    \begin{equation}\label{lambda-a-mu}
	\lambda_a(\mu):=P(\mu+2 \rho_c -\rho^{(j)}),
    \end{equation}
where $P(\cdot)$ is the projection of $\mu+2 \rho_c -\rho^{(j)}$ onto the cone $\caC_{\frg}^{(j)}$.
    It turns out $\lambda_a(\mu)$ is well-defined. That is, it is independent of the choice of an allowable  $j$. The \emph{lambda norm} of $\mu$ is defined \cite{Vog81, Ca} as
    \begin{equation}\label{lambda-norm}
    	\|\mu\|_{\rm lambda}:=\|\lambda_a(\mu)\|.
    \end{equation}
    Now let $\pi$ be an irreducible $(\frg, K)$ module. A $K$-type $\mu$ is called a \emph{lowest $K$-type} (LKT for short) of $\pi$ if $\mu$ occurs in $\pi$ and $\|\mu\|_{\rm lambda}$ attains the minimum among all the $K$-types of $\pi$. It is easy to see that $\pi$ has finitely many LKTs.

    The lowest $K$-types and the infinitesimal character are both important invariants of $\pi$. Moreover, they are linked in the following way: Let $\mu$ be one of the LKTs of $\pi$, then a representative of the infinitesimal character of $\pi$ can be chosen as
    \begin{equation}\label{inf-char}
    	\Lambda=(\lambda_a(\mu), \nu)\in \frh^*=\frt^*+\fra^*.
    \end{equation}
    Note that $\nu$ in \eqref{inf-char} has the same norm as the $\nu$-part in the Langlands parameter of $\pi$. Abusing the notation a bit, we  will not distinguish them.

\subsection{PRV component, spin norm and Dirac inequality}\label{sec-PRV-spin-DI}
Let $E_\mu$ and $E_\nu$ be two $\frk$-types. By  \cite{PRV}, the $\frk$-type $E_{\{\mu+w_0^K\nu\}}$ occurs exactly once in $E_\mu \otimes E_\nu$, where $w_0^K$ is the longest element in $W(\frk, \frt_f)$ and $\{\mu+w_0^K\nu\}$ is the unique element in $\mathcal{C}_{\frk}$ to which $\mu+w_0^K\nu$ is conjugate under the action of $W(\frk,\frt_f)$.
	
For the group $E_{7(7)}$, we have that
\begin{equation}\label{spin-module}
S_G=\bigoplus_{j=0}^{71} E_{\rho_n^{(j)}}
\end{equation}
as $\frk$-modules.     So the PRV components for $E_\mu \otimes S_G$ are $\{\mu+w_0^K\rho_n^{(j)}\mid 0 \leq j \leq 71\}$. Although $w_0^K\neq -1$, it turns out that these components coincide with $\{\mu-\rho_n^{(j)}\mid 0 \leq j \leq 71\}$ since $S_G$ is self-dual as a $\frk$ module.

The \emph{spin norm} of a $K$-type $E_{\mu}$ \cite{D13} is
    \begin{equation}\label{spin-norm}
    	\|\mu\|_{\rm spin} :=\min_{0\leq j \leq 71}\|\{\mu-\rho_n^{(j)}\}+\rho_c\|.
    \end{equation}
The \emph{spin norm} of an irreducible $(\frg, K)$ module $\pi$ is defined as
    $$
    \|\pi\|_{\rm spin}=\min \|\delta\|_{\rm spin},
    $$
    where $\delta$ runs over all the $K$-types of $\pi$. We call the $K$-type $\delta$ a \emph{spin lowest $K$-type} (spin LKT for short) of $\pi$ if it occurs in $\pi$ and that $\|\delta\|_{\rm spin}=\|\pi\|_{\rm spin}$.

    When $\pi$ is unitary and has infinitesimal character $\Lambda$, the original Dirac operator inequality \eqref{Dirac-inequality-original} can be rephrased as
    \begin{equation}\label{Dirac-inequality}
    	\|\pi\|_{\rm spin}\geq \|\Lambda\|.
    \end{equation}
    By Theorem 3.5.12 of \cite{HP2}, equality happens in \eqref{Dirac-inequality} if and only if $\pi$ has non-zero Dirac cohomology. Moreover, in such a case, it is exactly the spin LKTs of $\pi$ that contribute to $H_D(\pi)$.

\subsection{The u-small convex hull}\label{sec-u-small}
    Salamanca-Riba and Vogan introduced unitarily-small $K$-types to describe their unified conjecture about the unitary dual of real reductive Lie groups in \cite{SV}. Let
    $$
    R(\Delta(\frp,\frt_f))=\{\sum_{\alpha \in \Delta(\frp,\frt_f)}b_{\alpha}\alpha|0 \leq b_{\alpha} \leq 1\},
    $$
which is invariant under $W(\frk,\frt_f)$. For the group $E_{7(7)}$, this is the convex hull formed by the $W(\frk, \frt_f)$ orbits of the weights $2\rho_n^{(j)}$, $0\leq j\leq 71$. As in \cite{SV}, we  call $R(\Delta(\frp,\frt_f))$ the \emph{unitarily small} (\emph{u-small} for short) \emph{convex hull}. A $K$-type (or $\frk$-type) is called \emph{u-small} if its highest weight lies in the u-small convex hull. Otherwise, we will say it is \emph{u-large}.

\subsection{Vogan pencil}\label{sec-pencil}
The group $E_{7(7)}$ is not Hermitian symmetric.  Therefore, by Lemma 3.4 of \cite{Vog80}, the $K$-types of any infinite-dimensional $(\frg, K)$ module $\pi$ must be the union of certain \emph{pencils} $P(\mu)$, where
$$
P(\mu):=\{\mu+n\beta|n \in \mathbb{Z}_{\geq 0}\}.
$$

Our experience is that if the $K$-type $\mu$ is u-small, the spin norm should decrease along the Vogan pencil $P(\mu)$ firstly, and then strictly increases. This has been proven in some cases including $E_{7(7)}$
\cite{D17}. Thus using Dirac inequality along $P(\mu)$ will be more powerful in testing non-unitarity than only using it for the single $K$-type $\mu$.

\section{The Helgason-Johnson bound for $E_{7(7)}$}\label{sec-HJ}

\begin{prop}\label{prop-previous-HJ}
Let $G$ be $E_{7(7)}$. Let $\pi$ be an irreducible unitary $(\frg, K)$ module  whose infinitesimal character $\Lambda$ is given by \eqref{inf-char}. Then
\begin{itemize}
\item [(a)] $\|\nu\|\leq \sqrt{\frac{399}{2}}=\|\rho(G)\|$;
\item [(b)] $\|\nu\|\leq \sqrt{\frac{231}{2}}$ if $\pi$ is infinite-dimensional.
\end{itemize} 	
\end{prop}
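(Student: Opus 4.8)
The statement is essentially a recollection of the Helgason--Johnson bound \cite{HJ}, specialized and made explicit for $E_{7(7)}$, so my plan is to trace through their argument rather than to invent anything new. The Helgason--Johnson theorem says that if $\pi$ is an irreducible unitary $(\frg,K)$-module which is not one-dimensional (equivalently, if we are looking at the Langlands data $(x,\lambda,\nu)$ with a nontrivial continuous parameter), then the $\nu$-part of its infinitesimal character, measured against a maximally split Cartan, satisfies $\|\nu\| \le \|\rho\|$ for part (a). The point is that a unitary representation is in particular tempered-after-induction in the relevant sense: by the subquotient theorem $\pi$ embeds in a representation induced from a parabolic $P = MAN$ with a unitary (hence, in the relevant direction, tempered) representation of $M$ twisted by a character $e^{\nu}$ of $A$, and unitarity forces the ``leading exponent'' $\nu$ to lie in the convex hull of $W\rho$ restricted to $\fra^*$; the extreme case is $\nu = \rho$ itself, giving the trivial representation and the bound $\|\nu\|\le\|\rho(G)\|$. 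So for (a) I would: (i) invoke the Langlands/Casselman embedding to reduce to a parabolically induced picture; (ii) quote the Helgason--Johnson estimate that unitarity confines $\nu$ to the closed convex hull of $\{w\rho|_{\fra}: w\in W\}$; (iii) compute $\|\rho(G)\|^2$ for the split $E_7$ root system in the coordinates fixed in Section \ref{sec-EV-structure} and check it equals $399/2$. Step (iii) is a finite, explicit calculation: with $\rho = \rho^{(0)}$ the half-sum of the $63$ positive roots of $E_7$, one evaluates $B(\rho,\rho)$ directly, e.g. via $\langle\rho,\rho\rangle = \sum_i \langle\rho,\alpha_i^\vee\rangle$-type identities or simply by summing the standard $E_7$ coordinates of $\rho$; this is routine and I would relegate it to a one-line verification or an \texttt{atlas} check using \texttt{rho(G)}.

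For part (b), the improvement from $\rho(G)$ to the smaller bound $\sqrt{231/2}$ when $\pi$ is infinite-dimensional comes from excluding the finite-dimensional representations, whose parameters sit at the outer vertices of the relevant polytope. Concretely, for an infinite-dimensional $\pi$ the Langlands parameter must involve a proper parabolic $P = MAN$ with $\dim A \le \operatorname{split rank}(G) = 7$ but $M$ noncompact or $A$ proper, and Helgason--Johnson's refinement bounds $\|\nu\|$ by the norm of $\rho$ restricted to the $\fra$-part of the \emph{smallest} admissible parabolic, equivalently by $\|\rho(G)\| - (\text{contribution of the }M\text{-part})$ in the split case; the extremal configuration is now induction from a maximal parabolic, and the resulting bound is $\|\rho(\frl)^{\perp}\|$ for the best choice of Levi $\frl$. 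So here I would: (i) recall that infinite-dimensionality forces $\nu$ into the convex hull of $\{w\rho|_{\fra_{\mathrm{max}}}\}$ for $\fra_{\mathrm{max}}$ the split part of a \emph{proper} parabolic; (ii) identify which maximal (or appropriate) parabolic of split $E_7$ gives the largest such norm; (iii) compute that norm and verify it is $\sqrt{231/2}$. The arithmetic check $\|\rho\|^2 - (\text{something}) = 231/2$, i.e. a drop of $(399-231)/2 = 84$, should correspond to subtracting off $\|\rho_{\frl}\|^2$ for a Levi of type, plausibly, $D_6$ or $A_7/E_6$ inside $E_7$ — one verifies $\|\rho_{D_6}\|^2$ etc. against $84$ and picks the matching one.

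The main obstacle I anticipate is not any hard theorem but getting the two numerical constants and the ``which parabolic'' bookkeeping exactly right in the fixed coordinate system of Figure \ref{V diagram}: one must be careful that the bilinear form $B$ is normalized consistently with the $E_7$ coordinates $\alpha_1 = \frac12(1,-1,\dots,-1,1)$, $\alpha_2 = e_1+e_2$, $\alpha_i = e_{i-1}-e_{i-2}$, so that $\|\rho(G)\|^2$ really comes out to $399/2$ rather than a rescaled value, and then that the ``infinite-dimensional'' bound is read off from the correct maximal Levi. Since this proposition is explicitly flagged as the \emph{previous} bound (to be improved in Proposition \ref{prop-HJ}), I would keep the proof short: cite \cite{HJ} for the structural statement, and simply record the two norm computations — ideally cross-checked in \texttt{atlas} — as the only content that is specific to $E_{7(7)}$.
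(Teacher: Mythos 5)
The paper offers no proof of this proposition at all: it simply cites \cite{HJ} for item (a) and \cite{D20} for item (b), so any comparison is between your reconstruction and those sources. Your outline of (a) is essentially the standard Helgason--Johnson argument, and the constant checks out: by the Freudenthal--de Vries strange formula $\|\rho\|^{2}=h^{\vee}\dim\frg/12=18\cdot 133/12=399/2$ for $E_{7}$ in the normalization where roots have squared length $2$. That part is fine.

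For item (b), however, your proposed mechanism does not produce the stated constant, and the arithmetic cross-check you yourself suggest would fail. If the bound for infinite-dimensional $\pi$ came from induction off a proper maximal parabolic $P=LN$ with $\nu$ confined by $\|\rho(\frn)\|$, the best (smallest) such bound is obtained for the Levi of largest $\rho$-norm, namely $\frl$ of type $E_{6}$: then $\|\rho(\frn)\|^{2}=\|\rho\|^{2}-\|\rho_{\frl}\|^{2}=399/2-78=243/2$, not $231/2$. More decisively, the required drop is $399/2-231/2=84$, and no Levi subalgebra of $E_{7}$ has $\|\rho_{\frl}\|^{2}=84$ (one gets $78$ for $E_{6}$, $55$ for $D_{6}$, $28$ for $A_{6}$, $30.5$ for $D_{5}\times A_{1}$, etc.), so step (iii) of your plan for (b) cannot close. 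The actual bound $\sqrt{231/2}$ in \cite{D20} is obtained by a different technique: one combines the classical bound with the Dirac inequality $\|\Lambda\|^{2}\le\|\mu\|^{2}_{\rm spin}$ applied to a lowest $K$-type $\mu$ of $\pi$ and minimized along the Vogan pencil $\mu+n\beta$ (which exists precisely because $\pi$ is infinite-dimensional and $G$ is non-Hermitian), then maximizes $\|\mu\|^{2}_{\rm spin}-\|\mu\|^{2}_{\rm lambda}$ over $K$-types; the extremal case is the spherical pencil, consistent with the bound being attained exactly by the minimal representation in Example \ref{exam-HJ}. So (b) needs either a direct citation of \cite{D20} or a genuinely different argument from the parabolic bookkeeping you describe.
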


Item (a) of the above proposition traces back to Helgason and Johnson \cite{HJ} in 1969. Item (b) is obtained recently in \cite{D20}. Improvement of this bound will save us a lot of time in doing non-unitarity test. This section aims to enhance it further by adopting suitable assumptions.

\begin{example}\label{exam-HJ}
One can check directly that the following two irreducible representations are unitary:
\begin{verbatim}
final parameter(x=20925,lambda=[1,1,1,1,1,1,1]/1,nu=[1,1,1,0,1,0,1]/1)
final parameter(x=20925,lambda=[1,1,1,1,1,1,1]/1,nu=[1,1,1,0,1,1,1]/1)
\end{verbatim}
Both of them have non-zero Dirac cohomology (see the first entries of Tables \ref{table-EV-1110101} and \ref{table-EV-1110111}, respectively). The statistic $\|\nu\|^2$ for them are $\frac{159}{2}$ and $\frac{231}{2}$, respectively. Moreover, they have GK dimension $26$ and $17$, respectively. Indeed, the latter one is the unique minimal representation of $E_{7(7)}$.
\end{example}

\begin{prop}\label{prop-HJ}
Let $G$ be $E_{7(7)}$. Except for the two representations described in Example \ref{exam-HJ}, there is no irreducible unitary $(\frg, K)$ module $\pi$ whose infinitesimal character $\Lambda$, as given by \eqref{inf-char}, is a non-negative integer combination of $\zeta_1, \dots, \zeta_7$ such that
\begin{equation}\label{further-HJ-bound}
\frac{157}{2}	\leq \|\nu\|^2 \leq \frac{231}{2}.
\end{equation}		
\end{prop}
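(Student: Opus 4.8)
The plan is to reduce the claim to a finite computation carried out in \texttt{atlas}, using the previously established bound $\|\nu\|^2 \le \frac{231}{2}$ from Proposition \ref{prop-previous-HJ}(a) together with the fact that the infinitesimal character $\Lambda$ is required to be a non-negative integer combination of $\zeta_1, \dots, \zeta_7$. First I would observe that the hypothesis $\Lambda = \sum_{i=1}^7 a_i \zeta_i$ with $a_i \in \bbZ_{\ge 0}$, combined with the upper bound $\|\Lambda\|^2 \le \|\rho(G)\|^2 = \frac{399}{2}$ (which follows since $\|\nu\| \le \|\rho(G)\|$ and $\Lambda$ dominant forces the full norm to be controlled — more precisely one uses that for a unitary module the infinitesimal character norm is bounded, and an integral dominant $\Lambda$ lying in the relevant region has only finitely many possibilities), restricts $\Lambda$ to a finite, explicitly enumerable list of dominant integral weights. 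For each such $\Lambda$ on this list, I would run \texttt{set all=all\_parameters\_gamma(G, Lambda)} to produce the complete finite set of irreducible $(\frg, K)$ modules with that infinitesimal character.

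The second step is, for each candidate representation $\pi$ in each of these finite families, to first discard those for which $\|\nu\|^2$ falls outside the window $[\frac{157}{2}, \frac{231}{2}]$, since the $\nu$-part is read off directly from the final parameter via \eqref{inf-char}. For the representations that survive this filter, I would invoke the unitarity test built into \texttt{atlas} (the \texttt{is\_unitary} command, based on the ALTV algorithm \cite{ALTV}) to decide unitarity. The assertion of the proposition is then precisely that the only two unitary representations surviving all these filters are the two listed in Example \ref{exam-HJ}; one checks their $\|\nu\|^2$ values are $\frac{159}{2}$ and $\frac{231}{2}$, both inside the window, confirming they must appear. Here one should also take care that the window boundary $\frac{157}{2}$ is chosen so that nothing with $\|\nu\|^2 = \frac{157}{2}$ or slightly above survives except possibly these, and that the lower endpoint of the earlier-known infinite-dimensional bound $\frac{231}{2}$ lines up with the upper endpoint of \eqref{further-HJ-bound}; combining Proposition \ref{prop-previous-HJ}(b) with the new proposition then seals the range $\|\nu\|^2 > \frac{157}{2}$ entirely.

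The main obstacle I anticipate is controlling the size of the computation: the number of dominant integral $\Lambda$ with $\|\Lambda\|^2 \le \frac{399}{2}$ can be large, and for several of them \texttt{all\_parameters\_gamma} returns many representations, especially those with large support, so organizing the enumeration efficiently (for instance, exploiting that only the $\nu$-part magnitude matters for the filter, and that translation-functor / lambda-norm considerations let one skip many $\Lambda$) is where the real work lies. A secondary subtlety is justifying rigorously that the hypothesis "$\Lambda$ is a non-negative integer combination of $\zeta_1,\dots,\zeta_7$" together with the Helgason–Johnson bound genuinely yields a finite list — this uses that the $\zeta_i$ are a basis and the bound $\|\nu\| \le \|\rho(G)\|$ bounds the $\fra^*$-component while the $\frt_f^*$-component $\lambda_a(\mu)$ is automatically constrained for a unitary module, so that $\|\Lambda\|$ itself is bounded and hence the integer vector $(a_1,\dots,a_7)$ ranges over a finite set. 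Once finiteness is secured, the remainder is a (substantial but routine) \texttt{atlas} verification, and I would present the outcome as a table of the surviving cases rather than reproducing the machine output in full.
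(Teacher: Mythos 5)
The central gap is your finiteness claim. You assert that $\|\Lambda\|^2\le\|\rho(G)\|^2=\frac{399}{2}$ ``follows since $\|\nu\|\le\|\rho(G)\|$ and $\Lambda$ dominant forces the full norm to be controlled.'' It does not: the Helgason--Johnson bound of Proposition \ref{prop-previous-HJ} controls only the $\fra^*$-component of $\Lambda=(\lambda_a(\mu),\nu)$ and says nothing about the $\frt_f^*$-component $\lambda_a(\mu)$, which for a unitary module can be arbitrarily large (discrete series already show this, and a priori one could imagine modules with large $\lambda_a(\mu)$ \emph{and} $\nu$ inside your window). You flag this as a ``secondary subtlety'' but never resolve it, and without resolving it the set of dominant integral $\Lambda$ you propose to feed into \texttt{all\_parameters\_gamma} is infinite, so the argument does not close. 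This is precisely where the paper's proof does its real work: Parthasarathy's Dirac inequality gives $\|\lambda_a(\mu)\|^2+\|\nu\|^2=\|\Lambda\|^2\le\|\mu\|_{\rm spin}^2$ for a lowest $K$-type $\mu$, hence $\|\nu\|^2\le\|\mu\|_{\rm spin}^2-\|\mu\|_{\rm lambda}^2$. The computation of the quantities $A_j$ in \cite{D20} (with $\max_j A_j=78$) shows this difference is at most $78$ whenever $\mu$ is u-large, so the hypothesis $\|\nu\|^2\ge\frac{157}{2}=78.5$ forces $\mu$ to be u-small; among the $97752$ u-small $K$-types only $61$ satisfy $\|\mu\|_{\rm spin}^2-\|\mu\|_{\rm lambda}^2\ge 78.5$, and for those $\|\lambda_a(\mu)\|^2\le 24.5$, whence $78.5\le\|\Lambda\|^2\le 140$. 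Only then does one obtain a finite list ($1172$ integral infinitesimal characters) and a tractable unitarity check.

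A secondary, practical defect follows from the same omission: even granting a finite list of $\Lambda$'s, you propose running \texttt{is\_unitary} on \emph{every} parameter under every such $\Lambda$ whose $\nu$ lies in the window, whereas the paper also filters by requiring a lowest $K$-type among the $61$ members of \texttt{Certs}, cutting the test down to $33594$ representations. That filter is not merely an optimization to be waved at in a closing sentence --- it is the same Dirac-inequality step that secures finiteness in the first place, so your proof needs it either way.
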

	
	\begin{proof}
		Take a LKT $\mu$ of $\pi$. Then its infinitesimal character has the form $(\lambda_a(\mu), \nu)$ as in \eqref{inf-char}. By the Dirac inequality \eqref{Dirac-inequality}, one has that
		$$
		\|\Lambda\|^2=\|\lambda_a(\mu)\|^2 +\|\nu\|^2\leq \|\mu\|^2_{\rm spin}.
		$$
		Therefore,
		\begin{equation}\label{nu-bound}
			\|\nu\|^2\leq \|\mu\|^2_{\rm spin}- \|\mu\|^2_{\rm lambda}.
		\end{equation}
		As computed in Section 5.7 of \cite{D20}, we have that $\max\,\{A_j\mid 0\leq j\leq 71\}=78$. We refer the reader to Section 3 of \cite{D20} for the precise meaning of these $A_j$. It follows that for any u-large $K$-type $\mu$, one has that
		$$
		\|\mu\|_{\rm spin}^2 - \|\mu\|_{\rm lambda}^2\leq 78.
		$$
		Since $\|\nu\|^2\geq 78.5$ by our assumption \eqref{further-HJ-bound}, we conclude from \eqref{nu-bound} that $\mu$ can \emph{not} be u-large.
		
		There are $97752$ u-small $K$-types in total. Among them, only $61$ have the property that
		$$
		78.5\leq \|\mu\|_{\rm spin}^2 - \|\mu\|_{\rm lambda}^2.
		$$
		Let us collect these $61$ u-small $K$-types as \texttt{Certs}.

		Moreover, we compute that
		$0\leq \|\lambda_a(\mu)\|^2\leq 24.5$ for any $\mu\in \texttt{Certs}$. Therefore,
		\begin{equation}\label{can-Lambda}
			0+ 78.5 \leq \|\Lambda\|^2=\|\lambda_a(\mu)\|^2+\|\nu\|^2\leq 24.5 +115.5.
		\end{equation}		
		The right hand side above uses item (b) of Proposition \ref{prop-previous-HJ}. There are $1172$ integral $\Lambda$s meeting the requirement \eqref{can-Lambda}. We collect them as $\Omega$.

		Now a direct search using \texttt{atlas} says that there are $33594$ irreducible  representations $\pi$ such that $\Lambda\in \Omega$ and that $\pi$ has a LKT which is a member of \texttt{Certs}. It turns out that only two of them are unitary. They are described in Example \ref{exam-HJ}. This finishes the proof.	
	\end{proof}

\section{Dirac series of $E_{7(7)}$}\label{sec-EV-ds}

This section reports the Dirac series of $E_{7(7)}$. As learned from \texttt{atlas}, we will divide them into two parts according to whether the KGB element is fully-supported or not. The two parts are called FS-scattered representations and string representations, respectively.
Theorem A of \cite{D17} guarantees that there are only \emph{finitely many} FS-scattered representations which should be viewed as the core of Dirac series. Moreover, \cite{D21} gives a method counting the number of strings. Here the infinitely many Dirac series sharing the same KGB element \texttt{x}, which is not fully supported, and the same parameter $\nu$ is viewed as a \emph{string}.
	
\subsection{FS-scattered representations of $E_{7(7)}$}\label{sec-FS-EV}
	This subsection aims to sieve out the FS-scattered Dirac series representations for $E_{7(7)}$. Except for the trivial representation, and the two representations in Example \ref{exam-HJ}, Propositions \ref{prop-previous-HJ} and \ref{prop-HJ} tell us that we can find the remaining FS-scattered representations in the following way: enumerate the infinitesimal characters $\Lambda=[a, b, c, d, e, f, g]$ such that
	\begin{itemize}
		\item[$\bullet$] $a$, $b$, $c$, $d$, $e$, $f$, $g$ are non-negative integers;
		\item[$\bullet$] $a+c>0$, $b+d>0$, $c+d>0$, $d+e>0$, $e+f>0$, $f+g>0$;
		\item[$\bullet$] $\min\{a, b, c, d, e, f, g\}=0$;
		\item[$\bullet$] there exists a fully supported KGB element $x$ such that $\|\frac{\Lambda-\theta_x\Lambda}{2}\|< \sqrt{\frac{157}{2}}$.
	\end{itemize}
 The third item above uses the main result of Salamanca-Riba \cite{Sa}. For the fourth item, let $p$ be any irreducible representation  of $E_{7(7)}$ which has \texttt{atlas} parameter $(x,\lambda,\nu)$. Then we have
 $$\nu=\frac{\Lambda-\theta_x(\Lambda)}{2},$$
where $\theta_x$ is \texttt{involution(x)} in \texttt{atlas}. Therefore, the fourth item actually means
 \begin{equation}\label{new-HJ-bound}
 \|\nu\|<\sqrt{\frac{157}{2}}.
 \end{equation}
The second item is explained below.

	\begin{lemma}\label{lemma-EV-HP}
	Let $\Lambda=a\zeta_1+b\zeta_2+c\zeta_3+d\zeta_4+e\zeta_5+f \zeta_6+g \zeta_7$ be the infinitesimal character of any Dirac series representation $\pi$ of $E_{7(7)}$ which is dominant with respect to $\Delta^+(\frg, \frt_f)$. Then $a$, $b$, $c$, $d$, $e$, $f$, $g$ must be non-negative integers such that $a+c>0$, $b+d>0$, $c+d>0$, $d+e>0$, $e+f>0$, $f+g>0$.
     \end{lemma}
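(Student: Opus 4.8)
The plan is to combine Theorem \ref{thm-HP} with the known structure of the root systems $\Delta(\frg,\frt_f)$ and $\Delta(\frk,\frt_f)$ of $E_{7(7)}$. By Theorem \ref{thm-HP}, if $\pi$ has non-zero Dirac cohomology and $\gamma$ is the highest weight of a $\widetilde{K}$-type in $H_D(\pi)$, then the infinitesimal character satisfies $\Lambda = w(\gamma + \rho_c)$ for some $w \in W(\frg,\frt_f)$. Here $\gamma$ is the highest weight of a genuine $\widetilde{K}$-type (coming from a spin LKT of $\pi$ tensored with $S_G$), hence $\gamma + \rho_c$ is dominant and regular with respect to some positive system $(\Delta^+)^{(j)}(\frg,\frt_f)$ containing $\Delta^+(\frk,\frt_f)$. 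Conjugating by $W(\frg,\frt_f)$ to make $\Lambda$ dominant for $\Delta^+(\frg,\frt_f)$, I would reduce to the assertion that $\Lambda$ lies in the $W(\frg,\frt_f)$-orbit of some $\gamma + \rho_c$ where $\gamma$ ranges over dominant weights for $\Delta^+(\frk,\frt_f)$ (genuine, i.e.\ the relevant parity condition), and $\rho_c$ is the specific vector in \eqref{rhoc}.

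First I would establish integrality: since $\Lambda$ is $W(\frg,\frt_f)$-conjugate to $\gamma + \rho_c$ and $\gamma \in \frt_f^*$ is an analytically integral weight for $\widetilde{K}$ while $\rho_c$ is given explicitly in \eqref{rhoc} (a $K$-type highest weight, so integral), the coordinates of $\gamma+\rho_c$ in the $\zeta_i$-basis are integers; $W(\frg,\frt_f)$ preserves the root lattice translate, so $a,\dots,g \in \bbZ$. Dominance for $\Delta^+(\frg,\frt_f)$ forces them to be non-negative. Next, for the strict inequalities: the point is that $\gamma$ is dominant for $\Delta^+(\frk,\frt_f)$, so $\gamma + \rho_c$ is \emph{strictly} dominant for $\Delta^+(\frk,\frt_f)$ (since $\rho_c$ is strictly $\frk$-dominant and $\gamma$ is $\frk$-dominant), hence $\langle \gamma + \rho_c, \gamma_i^\vee \rangle \geq 1$ for each simple root $\gamma_i$ of $\frk$. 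Now $\Lambda = w^{-1}(\gamma+\rho_c)$ for the appropriate $w$, and one needs to translate the strict $\frk$-dominance of $\gamma+\rho_c$ into lower bounds on the $\zeta$-coordinates of $\Lambda$. Concretely, using the fact (from the structure section) that each $w^{(j)} \in W(\frg,\frt_f)^1$ sends $\caC_\frg^{(0)}$ into $\caC_\frk$, and that $\mathcal{C}_\frk = \bigcup_j \mathcal{C}_\frg^{(j)}$, one checks that a $\Delta^+(\frg,\frt_f)$-dominant $\Lambda$ that is the $W(\frg,\frt_f)$-image of a strictly $\frk$-dominant weight must satisfy $\langle \Lambda, \gamma_i^\vee\rangle > 0$ for all seven simple coroots $\gamma_i$ of $\frk$ — or rather, for enough of them to extract the six displayed inequalities.

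The concrete computation is then to express each $\gamma_i^\vee$ (equivalently each $\gamma_i$, since we work with $B(\cdot,\cdot)$) in terms of the $\alpha_i$ and thus determine, for a weight written as $\Lambda = a\zeta_1 + \cdots + g\zeta_7$, what $\langle\Lambda,\gamma_i^\vee\rangle > 0$ says. Using $\gamma_1 = \alpha_1$, $\gamma_i = \alpha_{i+1}$ for $2\le i\le 6$, and $\gamma_7 = \alpha_1 + 2\alpha_2 + 2\alpha_3 + 3\alpha_4 + 2\alpha_5 + \alpha_6$, and pairing against the fundamental weights $\zeta_i$, the condition $\langle \Lambda, \gamma_1^\vee\rangle > 0$ gives $a > 0$ — but since $a$ can be $0$ this must instead be handled via a $W(\frg,\frt_f)$-conjugate; this is the subtlety. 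In fact the six inequalities $a+c>0$, $b+d>0$, $c+d>0$, $d+e>0$, $e+f>0$, $f+g>0$ look like exactly the conditions ``$\langle\Lambda, \gamma_i^\vee\rangle > 0$ fails only when a single coordinate vanishes but the regularity of $\gamma+\rho_c$ along the corresponding $\frg$-directions forces a neighbor to be positive'' — so the real content is a case-by-case verification that no $\Delta^+(\frg,\frt_f)$-dominant $\Lambda$ violating one of these six inequalities can be $W(\frg,\frt_f)$-conjugate to a strictly $\frk$-dominant integral weight. I expect the main obstacle to be precisely this bookkeeping: one must either run through the seventy-two elements of $W(\frg,\frt_f)^1$ and track which faces of $\caC_\frg^{(0)}$ get mapped where, or argue more cleverly using the projection $P(\cdot)$ onto cones from Section \ref{sec-LKT}, and either way the combinatorics of the $E_7$ and $A_7$ Weyl groups is where the work lies; I would organize it by noting that violating, say, $a+c>0$ means $\Lambda$ lies on a codimension-two face, and then checking that the $\frk$-regular representatives in that $W$-orbit — which exist because $\gamma+\rho_c$ is $\frk$-regular — cannot exist, a contradiction.
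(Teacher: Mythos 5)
Your plan is essentially the paper's own proof: integrality and non-negativity come from the linearity of $E_{7(7)}$ together with dominance (the paper simply cites Remark 4.1 of \cite{D21}), and the six strict inequalities follow because $\gamma+\rho_c$ must be strictly $\Delta^+(\frk,\frt_f)$-dominant, so one checks directly that whenever $\Lambda$ violates one of them, $w\Lambda$ has a vanishing $\varpi_i$-coordinate for every $w\in W(\frg,\frt_f)^1$, giving a contradiction. Two small points worth tightening: your integrality step is looser than the cited result (genuine $\widetilde{K}$-integrality of $\gamma$ plus $K$-integrality of $\rho_c$ does not by itself give $\zeta$-integrality without explicitly invoking that $G$ is linear, which is exactly what Remark 4.1 of \cite{D21} supplies), and $\gamma+\rho_c$ is $\frk$-regular but need not be $\frg$-regular (several tables in the appendix have infinitesimal characters with a zero coordinate), so your opening claim that it is regular for some $(\Delta^+)^{(j)}(\frg,\frt_f)$ is a slight overstatement — harmless here, since your argument only ever uses strict $\frk$-dominance.
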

	
	\begin{proof}
		Since the group \texttt{E7\_s} is linear, it follows from Remark 4.1 of \cite{D21} that $a$, $b$, $c$, $d$, $e$, $f$, $g$ must be non-negative integers.
		
		Now if $a+c=0$, i.e., $a=c=0$,  a direct check says that for any $w\in W(\frg, \frt_f)^1$, at least one coordinate of $w\Lambda$ in terms of the basis $\{\varpi_1, \dots, \varpi_7\}$ vanishes. Therefore,
		$$
		\{\mu-\rho_n^{(j)}\} + \rho_c =w \Lambda
		$$
		could not hold for any $K$-type $\mu$. This proves that $a+c>0$. Other inequalities can be similarly deduced.
	\end{proof}

Let us collect all the infinitesimal characters satisfying the above four conditions as $\Phi$. We divide $\Phi$ into $\Phi_1,\dots,\Phi_{12}$, where $\Phi_i$ consists of the members of $\Phi$ whose largest coordinate equals  $i$. Note that $|\Phi|=271379$. The cardinalities of those $\Phi_i$s are presented as follows:
	\begin{center}
		\begin{tabular}{c|c|c|c|c|c}
			$\#\Phi_1$ & $\#\Phi_2$ & $\#\Phi_3$ & $\#\Phi_4$ & $\#\Phi_5$ & $\#\Phi_6$   \\
			\hline
			$35$ & $1085$  & $8518$ & $30459$ & $55014$ & $62169$ \\
			\hline
		$\#\Phi_7$ &	$\#\Phi_8$  & $\#\Phi_9$ & $\#\Phi_{10}$& $\#\Phi_{11}$ & $\#\Phi_{12}$  \\
			\hline
			 $51970$ &$34289$ & $18146$ & $7486$ & $2027$ & $181$
		\end{tabular}
	\end{center}

Although it only occupies $0.0128\%$ of $\Phi$, it turns out that the interesting part of the story happens within $\Phi_1$, whose members  are listed below:
	\begin{align*}
		&[0, 0, 1, 1, 0, 1, 0], [0, 0, 1, 1, 0, 1, 1], [0, 0, 1, 1, 1, 0, 1], [0, 0, 1, 1, 1, 1, 0], [0, 0, 1, 1, 1, 1, 1],\\
		&[0, 1, 1, 0, 1, 0, 1], [0, 1, 1, 0, 1, 1, 0], [0, 1, 1, 0, 1, 1, 1], [0, 1, 1, 1, 0, 1, 0], [0, 1, 1, 1, 0, 1, 1],\\
		& [0, 1, 1, 1, 1, 0, 1], [0, 1, 1, 1, 1, 1, 0], [0, 1, 1, 1, 1, 1, 1], [1, 0, 0, 1, 0, 1, 0], [1, 0, 0, 1, 0, 1, 1],\\
		& [1, 0, 0, 1, 1, 0, 1], [1, 0, 0, 1, 1, 1, 0], [1, 0, 0, 1, 1, 1, 1], [1, 0, 1, 1, 0, 1, 0], [1, 0, 1, 1, 0, 1, 1], \\
		&[1, 0, 1, 1, 1, 0, 1], [1, 0, 1, 1, 1, 1, 0], [1, 0, 1, 1, 1, 1, 1], [1, 1, 0, 1, 0, 1, 0], [1, 1, 0, 1, 0, 1, 1], \\
		& [1, 1, 0, 1, 1, 0, 1], [1, 1, 0, 1, 1, 1, 0], [1, 1, 0, 1, 1, 1, 1], [1, 1, 1, 0, 1, 0, 1], [1, 1, 1, 0, 1, 1, 0],	\\
& [1, 1, 1, 0, 1, 1, 1], [1, 1, 1, 1, 0, 1, 0],	[1, 1, 1, 1, 0, 1, 1], [1, 1, 1, 1, 1, 0, 1], [1, 1, 1, 1, 1, 1, 0].
	\end{align*}
	
	Let $\Pi_{\rm FS}(\Lambda)$ be the set of all the fully supported irreducible representations with infinitesimal character $\Lambda$. We collect the unitary members of $\Pi_{\rm FS}(\Lambda)$ as $\Pi_{\rm FS}^{\rm u}(\Lambda)$.

	\begin{lemma}\label{lemma-b2-empty}
		Let $G$ be $E_{7(7)}$. Then $\Pi_{\rm FS}^{\rm u}(\Lambda)$ is empty for any $\Lambda\in\Phi_i$ for $2\leq i\leq 14$.
	\end{lemma}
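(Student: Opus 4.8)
The plan is to combine the Dirac inequality with the two Helgason--Johnson type estimates of Propositions \ref{prop-previous-HJ} and \ref{prop-HJ}, reduce the assertion to a finite list of representations, and then dispose of that list with \texttt{atlas}.

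First I would record two facts about a hypothetical $\pi\in\Pi_{\rm FS}^{\rm u}(\Lambda)$ with $\Lambda\in\Phi_i$, $i\geq 2$. Such a $\pi$ is infinite-dimensional: a finite-dimensional representation has infinitesimal character $\lambda+\rho$ with $\lambda$ dominant integral, hence all seven of its coordinates in the basis $\zeta_1,\dots,\zeta_7$ are $\geq 1$, whereas $\min\{a,\dots,g\}=0$ for members of $\Phi$. Therefore Proposition \ref{prop-previous-HJ}(b) applies; and since $\Lambda$ is a non-negative integer combination of the $\zeta_k$ while the two exceptional representations of Example \ref{exam-HJ} have infinitesimal character $[1,1,1,1,1,1,1]\notin\Phi$, Proposition \ref{prop-HJ} forces
\[
\|\nu\|^2<\frac{157}{2}.
\]
Now choose a lowest $K$-type $\mu$ of $\pi$, so that $\Lambda=(\lambda_a(\mu),\nu)$; the Dirac inequality \eqref{Dirac-inequality} gives $\|\Lambda\|^2\leq\|\mu\|_{\rm spin}^2$, and therefore
\[
\|\mu\|_{\rm lambda}^2=\|\lambda_a(\mu)\|^2=\|\Lambda\|^2-\|\nu\|^2>\|\Lambda\|^2-\frac{157}{2}.
\]

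Next I would turn these inequalities into a finite search. If $\mu$ is u-large, then $\|\mu\|_{\rm spin}^2-\|\mu\|_{\rm lambda}^2\leq 78$ by the bound $\max\{A_j\mid 0\leq j\leq 71\}=78$ recalled in the proof of Proposition \ref{prop-HJ}, so $\|\mu\|_{\rm lambda}^2\geq\|\Lambda\|^2-78$ and $\|\nu\|^2\leq 78$; if $\mu$ is u-small it belongs to the explicit set of $97752$ u-small $K$-types. Either way $\|\mu\|_{\rm lambda}^2$ is confined to the short interval $(\|\Lambda\|^2-\frac{157}{2},\,\|\Lambda\|^2]$. Moreover, if $x$ denotes the KGB element of $\pi$ and $\theta_x$ its involution, then $\nu=\frac{1}{2}(1-\theta_x)\Lambda$ and $\lambda_a(\mu)$ is $W(\frk,\frt_f)$-conjugate to $\frac{1}{2}(1+\theta_x)\Lambda$ --- this is the same bookkeeping behind the fourth bullet defining $\Phi$ and behind Lemma \ref{lemma-EV-HP}. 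These constraints dispose of the vast majority of $\Lambda\in\Phi_i$, $i\geq 2$, outright, since for most of them no pair $(x,\mu)$ meets all the requirements. For the residual $\Lambda$, only finitely many members of $\Pi_{\rm FS}(\Lambda)$ can carry an admissible lowest $K$-type, and among those one may further discard any representation for which the Dirac inequality already fails somewhere along the Vogan pencil $P(\mu)=\{\mu+n\beta\}$, i.e.\ $\|\{\mu+n\beta\}\|_{\rm spin}<\|\Lambda\|$ for some $n\geq 0$, without calling the unitarity routine.

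Finally I would run the computation in \texttt{atlas}: for each of the $271344$ infinitesimal characters $\Lambda\in\Phi_2\cup\cdots\cup\Phi_{12}$ (which exhausts $\bigcup_{i\geq 2}\Phi_i$, as $\Phi_i=\emptyset$ for $i>12$), build the fully supported irreducibles with infinitesimal character $\Lambda$ via \texttt{all\_parameters\_gamma(G,Lambda)}, retain only those whose lowest $K$-types survive the spin/lambda and Vogan-pencil filters, and apply \texttt{is\_unitary} to the survivors; the output should be empty, so $\Pi_{\rm FS}^{\rm u}(\Lambda)=\emptyset$ for all such $\Lambda$. The main obstacle I anticipate is that the norm inequalities by themselves do not force a contradiction --- u-small lowest $K$-types can have sizeable lambda norm (the u-small convex hull for $E_{7(7)}$ reaches out as far as $\|2\rho_n^{(0)}\|^2=350$), and u-large ones are not excluded either --- so one is genuinely committed to a large finite computation whose residual representations can only be settled by the unitarity algorithm of \cite{ALTV}; the practical key is organizing the two combinatorial filters so that nearly all of the $271344$ infinitesimal characters are killed before any call to \texttt{is\_unitary}.
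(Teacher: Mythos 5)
Your proposal is correct and follows essentially the same strategy as the paper: restrict $\|\nu\|$ via the sharpened Helgason--Johnson bound of Proposition~\ref{prop-HJ} (after noting the two exceptional representations of Example~\ref{exam-HJ} have infinitesimal character $\rho\notin\Phi$), prune with the Dirac inequality and Vogan pencils starting from lowest $K$-types, and settle the residual cases with \texttt{is\_unitary} in \texttt{atlas}. The paper does not spell out a cleaner argument than this; it explicitly states that the lemma ``is obtained by tedious calculations'' using exactly these filters, illustrated by Examples~\ref{example-EVI-Phi9} and the one following it.
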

	
 Lemma \ref{lemma-b2-empty} is obtained by tedious calculations. Its validity gives an additional piece of evidence for Conjecture 1.4 of \cite{DDH}, which can be viewed as the half-integral version of Vogan's fundamental parallelepiped conjecture \cite{Vog22b}. The main task is to do non-unitarity test. To achieve this, we use the sharpened Helgason-Johnson bound, Dirac inequality, and the Vogan pencil starting from a LKT. If all these tools lose effect, we will finally use \texttt{atlas}. Let us illustrate the strategy via concrete examples.
	
\begin{example}\label{example-EVI-Phi9}
		Consider the infinitesimal character $\Lambda=[0, 0, 1, 1, 0, 1, 9]$ in $\Phi_9$.
		\begin{verbatim}
			G:E7_s
			set all=all_parameters_gamma(G, [0, 0, 1, 1, 0, 1, 9])
			#all
			Value: 5274
			set allFS=## for p in all do if #support(p)=7 then [p] else [] fi od
			#allFS
			Value: 4043
		\end{verbatim}
		Therefore, there are $5274$ irreducible representations under $\Lambda$, while $|\Pi_{\rm FS}(\Lambda)|=4043$.
		
Now let us adopt the sharpened Helgason-Johnson bound. Since it is easier to enter integers into \texttt{atlas}  than rational numbers, we collect the transposed vectors $2 \zeta_1, 2\zeta_2, \dots, 2\zeta_7$ as \texttt{TgFWts}.
\begin{verbatim}
set TgFWts=mat: [[0,1,-1,0,0,0,0],[0,1,1,0,0,0,0],[0,1,1,2,0,0,0],[0,1,1,2,
2,0,0],[0,1,1,2,2,2,0],[0,1,1,2,2,2,2],[-2,-2,-3,-4,-3,-2,-1],[2,2,3,4,3,2,1]]
\end{verbatim}
Then the command
\begin{verbatim}
(nu(p)*TgFWts)*(nu(p)*TgFWts)<4*157/2
\end{verbatim}
is equivalent to $\|\nu\|<\sqrt{\frac{157}{2}}$.
\begin{verbatim}
set oldHJ=##for p in allFS do if (nu(p)*TgFWts)*(nu(p)*TgFWts)
<=4*399/2 then [p] else [] fi od
#oldHJ
Value: 2134
set newHJ=##for p in allFS do if (nu(p)*TgFWts)*(nu(p)*TgFWts)
<4*157/2 then [p] else [] fi od
#newHJ
Value: 73
\end{verbatim}
		Therefore, there are $2134$ representations in $|\Pi_{\rm FS}(\Lambda)|$ satisfying $\|\nu\|\leq \sqrt{\frac{399}{2}}$, while only $73$ of them further meet the requirement $\|\nu\|< \sqrt{\frac{157}{2}}$.
		
Finally, let us look at one of the LKTs of the above four representations.
		
\begin{verbatim}
set x71=KGB(G,71)
void: for p in newHJ do print(highest_weight(LKTs(p)[0],x71)) od
((),KGB element #71,[1, -9,  8,  0,  0,  1,  7])
((),KGB element #71,[1, -9,  8,  0,  0,  1,  7])
		    ......
((),KGB element #71,[4, -8,  5,  1,  1,  0,  6])
\end{verbatim}
Here some \texttt{atlas} outputs are omitted to save space. Then we can compute the minimal spin norm along the Vogan pencil starting from the above LKTs and finally verify that they are all strictly smaller than the norm of $\Lambda$. Therefore, by Dirac inequality, we conclude that $\Pi_{\rm FS}^{\rm u}(\Lambda)$ is empty.
\end{example}

The above method fails on certain members of $\Phi$:
$$
[0,0,1,1,0,1,2], [0,0,1,1,0,2,0], [0,0,1,1,0,2,1], [0,0,1,1,1,0,2],
\dots, [4, 0, 0, 1, 0, 1, 0].
$$
However, a more careful look says that there is no fully supported irreducible unitary representation under them. Let us provide one example.

\begin{example}
Let us consider the infinitesimal character $\Lambda=[0,0,1,1,1,0,2]$.
\begin{verbatim}
	set all=all_parameters_gamma(G,[0,0,1,1,1,0,2])
	#all
	Value: 5274
	set allFS=## for p in all do if #support(p)=7 then [p] else [] fi od
	#allFS
	Value: 4031
\end{verbatim}
Therefore $|\Pi_{\rm FS}(\Lambda)|=4031$.

\begin{verbatim}
	set newHJ=##for p in allFS do if (nu(p)*TgFWts)*(nu(p)*TgFWts)
	<4*157/2 then [p] else [] fi od
	#nnewHJ
	Value: 2413
\end{verbatim}

A careful look at \texttt{newHJ} says that non-unitarity test using the Vogan pencil starting from a LKT fails for the following five representations:
\begin{verbatim}
newHJ[139], newHJ[510], newHJ[512], newHJ[944], newHJ[2066].
\end{verbatim}
At this stage we use \texttt{atlas}. It turns out that each of them is non-unitary. For instance,
\begin{verbatim}
is_unitary(newHJ[2066])
Value: false
\end{verbatim}
We conclude that $\Pi_{\rm FS}^{\rm u}(\Lambda)=\emptyset$.
\end{example}
	
It is interesting to note that the statistic $\|\nu\|^2$ for the $125$ FS-scattered representations of $E_{7(7)}$ is distributed as follows:
\begin{align*}
&7.5, 10.5^{\underline{4}}, 12^{\underline{4}},
12.5^{\underline{2}}, 13.5, 14.5,
16^{\underline{2}},
17^{\underline{4}},
17.5^{\underline{9}},
18^{\underline{2}},
20^{\underline{2}},
28^{\underline{8}}, 29^{\underline{18}}, 30^{\underline{18}},\\
&30.5^{\underline{3}}, 31,
40.5^{\underline{2}},
42^{\underline{8}},
53.5^{\underline{6}}, 54, 55^{\underline{7}}, 55.5^{\underline{2}}, 78^{\underline{16}}, 79.5, 115.5, 199.5.
\end{align*}
Here $a^{\underline{k}}$ means that the value $a$ occurs $k$ times.

\subsection{String representations of $E_{7(7)}$}\label{sec-string-EV}
This section aims to count the number of string representations in the Dirac series of $E_{7(7)}$ using the method of \cite{D21}. Firstly, let us verify that Conjecture 2.6 of \cite{D21} and the binary condition hold for the group $E_{7(7)}$.

\begin{example}
Consider the case that $\texttt{support(x)=[1, 2, 3, 4, 5, 6]}$. There are $1164$ such KGB elements in total. We compute that there are $59061$ infinitesimal characters $\Lambda=[a, b, c, d, e, f, g]$ in total such that
\begin{itemize}
\item[$\bullet$] $b$, $c$, $d$, $e$, $f$, $g$ are non-negative integers, $a=0$;
\item[$\bullet$] $a+c>0$, $b+d>0$, $c+d>0$, $d+e>0$, $e+f>0$ and $f+g>0$;
\item[$\bullet$] there exists a  KGB element $x$ with support $[1, 2, 3, 4, 5, 6]$ such that $\|\frac{\Lambda-\theta_x\Lambda}{2}\|< \sqrt{\frac{157}{2}} $.
\end{itemize}
Then we exhaust all the irreducible unitary representations under these infinitesimal characters  with the above $1164$ KGB elements. It turns out that such representations occur only when $b, c, d, e, f, g=0$ or $1$.  Then we check that each $\pi_{L(x)}$ is indeed unitary. \hfill\qed
\end{example}

Similarly, we handle all the other non fully supported KGB elements. It follows that Conjecture 2.6 of \cite{D21} and the binary condition hold for $E_{7(7)}$.

Now we use Section 5 of \cite{D21} to pin down the number of strings in $\widehat{E_{7(7)}}^d$.
We compute that
\begin{align*}
&N([0,1,2,4,5,6])=4, \quad N([0,1,2,3,5,6])=14, \quad N([0,1,3,4,5,6])=18, \\
&N([0,1,2,3,4,6])=30, \quad N([0,2,3,4,5,6])=74, \quad N([1,2,3,4,5,6])=112,\\
&N([0,1,2,3,4,5])=110.
\end{align*}
In particular, it follows that $N_6=362$. We also compute that
$$
N_0=72, \quad N_1=140, \quad N_2=210, \quad N_3=323, \quad N_4=454, \quad N_5=496.
$$
Therefore, the total number of strings for $E_{7(7)}$ is equal to
$$
\sum_{i=0}^{6} N_i=2057.
$$

To facilitate the classification of the Dirac series of $E_{7(7)}$, some auxiliary files have been built up. They are available via the following link:
\begin{verbatim}
https://www.researchgate.net/publication/364253102_EV-files
\end{verbatim}

\section{Dirac index}\label{sec-EV-DI}
For convenience, we assume that $G$ is equal rank in this section. Then $\dim\frp$ is even, and there is only one spin module $S_G$. Let $\pi$ be an irreducible unitary $(\frg, K)$-module.  The Dirac operator interchanges $\pi \otimes S_G^+$ and $\pi \otimes S_G^-$. Thus the Dirac cohomology $H_D(\pi)$ breaks up into the even part $H_D^+(\pi)$ and the odd part $H_D^-(\pi)$. The \emph{Dirac index} is defined as the virtual module $\widetilde{K}$-module
\begin{equation}\label{def-DI}
	{\rm DI}(\pi)=H_D^+(\pi) - H_D^-(\pi).
\end{equation}
By Proposition 2.5 of \cite{MPVZ}, we also have
$$
\text{DI}(\pi)=\pi \otimes S_G^+-\pi \otimes S_G^-.
$$
Let $\gamma$ be any $\widetilde{K}$ type of $H_D(\pi)$ (if exists).
Therefore, to compute $\text{DI}(\pi)$ from $H_D(\pi)$ via \eqref{def-DI}, it remains to detect whether $\gamma$ lives in $H_D^+(\pi)$ or $H_D^-(\pi)$. That is,  it remains to detect whether $\gamma$ lives in $\pi\otimes S^+_G$ or $\pi\otimes S^-_G$.  Lemma 2.3 of \cite{DW21} is helpful on this aspect.

Another more efficient way of computing Dirac index is given by \cite{MPVZ}. Its \texttt{atlas} realization is the following command:
	\begin{verbatim}
		show_Dirac_index(p)
	\end{verbatim}

In \cite{DDY}, we report an irreducible unitary representation $\pi$ of \texttt{F4\_s}, the linear split $F_4$, such that
$$
{\rm Hom}(H_D^+(\pi), H_D^-(\pi))\neq 0.
$$
In this case, we simply say that \emph{cancellation} happens for $\pi$.

Now for the group $E_{7(7)}$, there are $19$ FS-scattered representations in total for which cancellation happen. We will put stars on their KGB elements. In each case, the Dirac index turns out to be zero. Thus  Conjecture 1.4 of \cite{D21} holds for $E_{7(7)}$.

\begin{example}\label{exam-EV-cancellation}
Let us consider the  seventh entry of Table \ref{table-EV-1001010}, namely the representation  with \texttt{KGB(G,16648)}.  It has infinitesimal character $[1,0,0,1,0,1,0]$, which is conjugate to $\rho_c$ under the action of $W(\frg, \frh_f)$. This representation has four spin lowest $K$-types:
\begin{align*}
\rho_n^{(26)}=[2,0,2,1,0,2,2], \quad \rho_n^{(38)}=[1,0,3,0,1,2,1], \\
\rho_n^{(19)}=[1,1,1,1,1,1,3], \quad \rho_n^{(30)}=[0,1,2,0,2,1,2].
\end{align*}
Therefore, $H_D(\pi)$ consists of four copies of the trivial $\widetilde{K}$-type. Note that
\begin{align*}
&w^{(26)}=s_2s_4s_5s_6s_3s_4s_5s_1,
&w^{(38)}=s_2s_4s_5s_6s_3s_4s_5s_1s_3,\\
&w^{(19)}=s_2s_4s_5s_6s_3s_4s_1,
&w^{(30)}=s_2s_4s_5s_6s_3s_4s_1s_3.
\end{align*}
Their lengths are  $8$, $9$, $7$, $8$, respectively. Thus by  Lemma 2.3 of \cite{DW21},  there are two trivial $\widetilde{K}$-types living in $H_D^+(\pi)$, while the other two live in $H_D^-(\pi)$. As a consequence, the Dirac index of $\pi$ vanishes.

Alternatively, we can compute DI($\pi$) by \texttt{atlas} as follows (certain outputs are omitted):
\begin{verbatim}
set p=parameter(KGB(G,16648),[2,1,-2,1,1,1,0],[1,-2,-3,4,-1,2,-1])
show_dirac_index(p)
Dirac index is 0
\end{verbatim}
which agrees with the earlier calculation.\hfill\qed
\end{example}

\begin{example}\label{EV-minimal}
The first entry of Table \ref{table-EV-1110111} is the minimal representation. Its $K$-types are
$$
\{\mu_n:= n\beta\mid n\in\bbZ_{\geq 0} \}.
$$
We compute that its spin LKTs are exactly $\mu_1, \mu_2, \mu_3$ and $\mu_4$.
Moreover, both $\mu_1$ and $\mu_4$ contribute the following two $\widetilde{K}$-types:
$$
[6, 0, 0, 0, 0, 0, 0], \quad [0, 0, 0, 0, 0, 0, 6],
$$
but with opposite signs on each of them.

Both $\mu_2$ and $\mu_3$ contribute the following $18$ multiplicity-free $\widetilde{K}$-types:
\begin{align*}
&[4, 0, 1, 0, 0, 0, 1], \quad [3, 0, 1, 1, 0, 0, 0], \quad [3, 1, 0, 0, 0, 1, 1], \quad [3, 0, 0, 0, 0, 2, 1],\\
&[2, 1, 0, 1, 0, 1, 0], \quad [2, 0, 0, 1, 0, 2, 0], \quad [0, 2, 0, 1, 0, 0, 2], \quad [0, 2, 0, 0, 2, 0, 0],\\
&[0, 1, 2, 0, 0, 1, 0], \quad [0, 1, 0, 1, 0, 1, 2], \quad [1, 2, 0, 0, 0, 0, 3], \quad [0, 1, 0, 0, 2, 1, 0],\\
&[0, 0, 2, 0, 0, 2, 0], \quad [1, 0, 3, 0, 0, 0, 0], \quad [1, 1, 0, 0, 0, 1, 3], \quad [0, 0, 0, 1, 1, 0, 3],\\
&[0, 0, 0, 0, 3, 0, 1], \quad [1, 0, 0, 0, 1, 0, 4].
\end{align*}
However, their signs differ on each $\widetilde{K}$-type.
Therefore, the Dirac index of the minimal representation vanishes.

Theorem 6.2 of \cite{DW21} is also helpful here. Indeed, we note that
$$
B(\mu_n, \zeta_2)=B(n\beta, \zeta_2)=n, \quad \forall n\in\bbZ_{\geq 0}.
$$
Therefore, $\mu_1/\mu_4$, $\mu_2/\mu_3$ have distinct parities.  \hfill\qed
\end{example}

\section{Two remarkable Dirac series}\label{sec-two-ds}

For all the previously known Dirac series, say those in \cite{BDW, BP15, DD, DDH, DDL, DDY}, the spin LKTs always have multiplicity one. As we shall see, this will not hold for $E_{7(7)}$ any more.

Let us look at the fifteenth entry of Table \ref{table-EV-1001010}. Namely, the one with \texttt{KGB(G,9650)}.
\begin{verbatim}
set p=parameter(KGB(G)[9650],[1,-2,-1,4,-3,5,-1],[0,-2,-1,3,-3,4,-1])
\end{verbatim}
This representation has infinitesimal character $[1, 0, 0, 1, 0, 1, 0]$, whose norm is $\sqrt{42}$.
\begin{verbatim}
infinitesimal_character(p)*2*rho_check(G)
Value: 182/1
\end{verbatim}
As guided by Theorem \ref{thm-HP}, to study the Dirac cohomology of \texttt{p}, the output above tells us that it suffices to look at its $K$-types up to the height $182$.
\begin{verbatim}
print_branch_irr(p,KGB(G,71),182)
(1+0s)*(KGB element #71,[0, 3, 0, 0, 0, 1, 0])
(1+0s)*(KGB element #71,[0, 3, 1, 0, 0, 0, 0])
(1+0s)*(KGB element #71,[0, 2, 0, 1, 0, 0, 1])
(1+0s)*(KGB element #71,[0, 2, 0, 0, 1, 1, 0])
(1+0s)*(KGB element #71,[1, 3, 0, 0, 0, 1, 0])
(1+0s)*(KGB element #71,[1, 3, 0, 0, 0, 0, 2])
(2+0s)*(KGB element #71,[0, 2, 1, 0, 1, 0, 0])
(1+0s)*(KGB element #71,[1, 3, 1, 0, 0, 0, 0])
(2+0s)*(KGB element #71,[0, 3, 0, 0, 1, 0, 1])
(1+0s)*(KGB element #71,[0, 2, 1, 0, 0, 1, 1])
(1+0s)*(KGB element #71,[1, 4, 0, 0, 0, 0, 1])
(1+0s)*(KGB element #71,[0, 3, 0, 0, 0, 1, 2])
\end{verbatim}
Using \eqref{K-coords-shift}, we know these $K$-types are
\begin{align*}
[0, 0, 0, 0, 1, 0, 7], \quad [0, 1, 0, 0, 0, 0, 8], \quad [0, 0, 1, 0, 0, 1, 7], \quad [0, 0, 0, 1, 1, 0, 7], \\
[1, 0, 0, 0, 1, 0, 8], \quad [1, 0, 0, 0, 0, 2, 7], \quad [0, 1, 0, 1, 0, 0, 8], \quad [1, 1, 0, 0, 0, 0, 9], \\
[0, 0, 0, 1, 0, 1, 8], \quad [0, 1, 0, 0, 1, 1, 7], \quad [1, 0, 0, 0, 0, 1, 9], \quad [0, 0, 0, 0, 1, 2, 7].
\end{align*}
Their spin norms are $2 \sqrt{17},\sqrt{58},\sqrt{58},\sqrt{58},\sqrt{58},2 \sqrt{17},\sqrt{42},\sqrt{58},\sqrt{58},\sqrt{58},\sqrt{58},\sqrt{74}$, respectively.  Therefore, the seventh $K$-type is the unique spin LKT of \texttt{p}. However, the term ``\texttt{(2+0s)}" says that it has multiplicity \emph{two} in \texttt{p}. This is also \emph{partially} supported by the following:
\begin{verbatim}
show_dirac_index(p)
coeff  lambda              highest weight   fund.wt.coords.  dim
2      [1,0,0,1,0,1,0]/1   [0,0,0,0,0,0,0]  [0,0,0,0,0,0,0]  1
\end{verbatim}

The above calculation can also be checked by the \texttt{atlas} function \texttt{Dirac.at}, which is illustrated carefully in the seminar \cite{Vog22}.
\begin{verbatim}
set x=KGB(G,71)
set P=branch_irr(p,182)
<Dirac.at
set PrintDiracInfo(KTypePol P)=
void: prints(" hwt  ", "  spin wt  ");
tabulate( for c@mu in P do let (, , wt)=highest_weight(mu,x) in
[to_string(wt), to_string(DiracIC(mu))] od)
PrintDiracInfo(P)
         hwt                    spin wt
[0, 3, 0, 0, 0, 1, 0]  [1, 0, 0, 1, 1, 0, 1]/1
[0, 3, 1, 0, 0, 0, 0]  [0, 0, 1, 1, 0, 1, 0]/1
[0, 2, 0, 1, 0, 0, 1]  [0, 0, 1, 1, 0, 1, 0]/1
[0, 2, 0, 0, 1, 1, 0]  [0, 0, 1, 1, 0, 1, 0]/1
[1, 3, 0, 0, 0, 1, 0]  [0, 0, 1, 1, 0, 1, 0]/1
[1, 3, 0, 0, 0, 0, 2]  [1, 0, 0, 1, 1, 0, 1]/1
[0, 2, 1, 0, 1, 0, 0]  [1, 0, 0, 1, 0, 1, 0]/1
[1, 3, 1, 0, 0, 0, 0]  [0, 0, 1, 1, 0, 1, 0]/1
[0, 3, 0, 0, 1, 0, 1]  [0, 0, 1, 1, 0, 1, 0]/1
[0, 2, 1, 0, 0, 1, 1]  [0, 0, 1, 1, 0, 1, 0]/1
[1, 4, 0, 0, 0, 0, 1]  [0, 0, 1, 1, 0, 1, 0]/1
[0, 3, 0, 0, 0, 1, 2]  [0, 1, 1, 0, 1, 1, 0]/1
\end{verbatim}
Again, one sees that \emph{only} the \texttt{spin wt} produced by the seventh $K$-type equals the infinitesimal character $[1, 0, 0, 1, 0, 1, 0]$. Thus, it is the \emph{unique} spin LKT. By looking at \texttt{P}, one sees that this $K$-type has height $172$ and multiplicity two (the irrelevant terms are omitted below):
\begin{verbatim}
P
Value:
2* K_type(x=62, lambda=[0,1,0,1,0,0,1]/1) [172]
\end{verbatim}

We conclude that the Dirac series \texttt{p} has a unique spin LKT which has multiplicity two. Similar thing happens to the sixteenth entry of Table \ref{table-EV-1001010}, namely the one with \texttt{KGB(G,9648)}.

Let us end this section with a few remarks.

\begin{rmk}\label{rmk-mult}
(a) In the summer of 2010, J.-S. Huang told the second named author that he had announced the following conjecture at an international conference: let $\pi$ be any Dirac series. Then any $K$-type  contributing to $H_D(\pi)$ (that is, any spin LKT of $\pi$ in our language) should have multiplicity one.

Dong did not attend that conference, and there was no written form of this conjecture. Yet it was kept in his mind. After a decade, we are now able to give counter examples.

(b) The paper \cite{BP15} studies Dirac cohomology of some unipotent representations of $Sp(2n, \bbR)$ and $U(p, q)$. It is worth mentioning that Dirac cohomology there can have multiplicities. However, the underlying reason is that different spin LKTs contribute to the same $\widetilde{K}$ type, while each spin LKT there is actually multiplicity-free.

(c) The research announcement \cite{BP19} suggests application of Dirac cohomology in the theory of automorphic forms. In this direction,
we do not know the implication of multiplicities of spin LKTs.
\end{rmk}

\section{Appendix}\label{sec-appendix}
This section presents all the $125$ fully supported Dirac series of $E_{7(7)}$. See Tables \ref{table-EV-0110101}--\ref{table-EV-1111111}. Those marked with $\clubsuit$ are also special unipotent representations in the sense of \cite{BV}. They are determined for exceptional Lie groups by Adams et al. \cite{LSU}. As mentioned in Section \ref{sec-EV-DI}, those marked with stars are the Dirac series for which the even part and the odd part of the Dirac cohomology share certain $\widetilde{K}$ type(s). In each case, the Dirac index turns out to vanish completely.

There is some duality among the Dirac series of $E_{7(7)}$. For instance, in Table \ref{table-EV-0110101}, there is a FS-scattered representation $\pi$ with parameter
\begin{verbatim}
(13601, [-1,3,2,0,1,-3,5], [-1,2,2,-1,1,-4,5]).
\end{verbatim}
On the other hand, there is another FS-scattered representation $\pi^\prime$ with parameter
\begin{verbatim}
(13600, [-1,3,2,0,1,-3,5], [-1,2,2,-1,1,-4,5]).
\end{verbatim}
One can obtain the spin LKTs of $\pi^\prime$ from those of $\pi$ by reversing the coordinates. That is, the spin LKTs of $\pi^\prime$ are contragredients of those of $\pi$. Thus $\pi$ and $\pi^\prime$ are dual to each other in certain sense. In this case, we shall only present the KGB element of $\pi^\prime$ in the \textbf{bolded} fashion, and omit its other information.

\begin{table}[H]
	\centering
	\caption{Infinitesimal character $[0,1,1,0,1,0,1]$}
	\begin{tabular}{lcc}
		$\# x$ & $\lambda$/$\nu$ & Spin LKTs    \\
		\hline
		$20310$ & $[-2,1,5,-2,5,-2,5]$ &$[0,1,2,0,2,1,0]$, $[1,1,1,2,1,1,1]$, \\
		&       $[-\frac{3}{2},1,\frac{5}{2},-\frac{3}{2},\frac{5}{2},-\frac{3}{2},\frac{5}{2}]$                & $[1,0,3,0,3,0,1]$, $[0,2,2,0,2,2,0]$               \\
		$19506$ & $[-2,1,3,0,2,-1,4]$ &  $[4,0,1,0,1,0,4]$, $[3,2,0,0,0,2,3]$,\\
		&        $[-2,0,2,0,2,-2,3]$             &$[3,1,0,2,0,1,3]$, $[4,1,1,0,1,1,4]$ \\
\Xcline{1-1}{0.65pt}
		$13601$ & $[-1,3,2,0,1,-3,5]$ & 		$[0,1,2,0,0,3,2]$, $[0,2,2,1,0,2,2]$,\\
		\textbf{13600}       &$[-1,2,2,-1,1,-4,5]$ & $[1,1,1,1,1,3,1]$, $[1,0,3,0,1,2,3]$\\
\Xcline{1-1}{0.65pt}		
		$12165$ & $[-1,3,2,0,1,-3,5]$ &
		 $[1,0,0,0,2,0,7]$, $[0,1,0,1,0,2,6]$,\\
		 \textbf{12161} &$[-\frac{3}{2},2,\frac{3}{2},0,0,-\frac{7}{2},5]$ & $[0,0,2,0,1,0,7]$,  $[1,1,0,1,1,0,8]$\\
\Xcline{1-1}{0.65pt}
		 $11702$ &  $[1,0,1,-1,3,-1,2]$ &  $[0,1,2,0,1,0,5]$, $[0,2,2,0,0,0,6]$,$[1,0,2,1,0,1,5]$,\\
		\textbf{11701} &$[-3,1,4,-3,4,-3,1]$ &   $[0,2,1,0,2,0,5]$,$[0,0,6,0,0,0,2]$,$[0,2,2,0,2,0,4]$,\\
		& &  $[1,1,1,1,1,1,5]$\\
\Xcline{1-1}{0.65pt}		
		$3859$ & $[2,2,-3,2,-1,2,0]$ &
		 $[2,0,0,3,0,0,6]$, $[3,1,0,1,1,0,6]$,\\
		\textbf{3858} &$[-\frac{5}{2},1,\frac{7}{2},-\frac{5}{2},1,-1,2]$&  $[2,0,2,1,0,2,4]$, $[2,1,1,0,1,1,6]$
	\end{tabular}
	\label{table-EV-0110101}
\end{table}

\begin{table}[H]
	\centering
	\caption{Infinitesimal character $[0,1,1,0,1,1,1]$}
	\begin{tabular}{lcc}
		$\# x$ & $\lambda$/$\nu$ &   Spin LKTs   \\
		\hline
		$19804$ & $[-3,3,4,-1,1,2,1]$ &    $[0,0,0,2,0,4,0]$,$[0,1,0,1,0,5,0],$\\
		\textbf{19803} &$[-3,4,4,-3,1,1,1]$ &  $[0,0,0,3,0,4,0]$,$[0,1,0,2,0,5,0]$\\
\Xcline{1-1}{0.65pt}
		$5545$ & $[-1,1,4,-2,1,2,1]$ &   $[4,0,0,3,1,0,5]$, $[5,0,1,3,0,0,4]$,\\
		&$[-\frac{7}{2},1,\frac{9}{2},-\frac{7}{2},1,1,1]$ &  $[5,1,0,2,0,1,5]$
	\end{tabular}
	\label{table-EVI-0110111}
\end{table}

\begin{table}[H]
	\centering
	\caption{Infinitesimal character $[1,0,0,1,0,1,0]$}
	\begin{tabular}{lcc}
		$\# x$ & $\lambda$/$\nu$   & Spin LKTs   \\
		\hline
		$20895_{\clubsuit}$ & $[1,0,1,2,0,2,0]$ &    $[2,1,2,1,0,1,2$], $[2,1,2,0,2,1,0]$, $[2,2,0,1,2,0,2]$,\\
		$\textbf{20894}_{\clubsuit}$ &$[1,0,0,1,0,1,0]$ &  $[3,0,1,1,1,2,1]$, $[0,2,2,0,2,0,2]$,$[1,0,3,0,1,2,1]$,\\
		& &   $[1,1,1,1,1,1,3]$, $[1,1,1,0,3,1,1]$\\
\Xcline{1-1}{0.65pt}		
		$20854_{\clubsuit}$ & $[1,0,0,3,0,2,0]$  & $[0,1,0,2,1,0,5]$, $[2,0,1,0,2,0,5]$,\\
		$\textbf{20853}_{\clubsuit}$ &$[1,0,0,1,0,1,0]$ &  $[1,2,0,1,0,1,5]$, $[1,1,1,1,1,1,3]$\\
\Xcline{1-1}{0.65pt}		
		$20274_{\clubsuit}$ & $[3,1,1,-1,1,2,0]$  & $[1,1,0,1,1,0,6]$, $[1,0,1,1,2,0,4]$,\\
		$\textbf{20273}_{\clubsuit}$ &$[2,-\frac{1}{2},-\frac{1}{2},1,-\frac{1}{2},2,0]$   &   $[0,2,0,2,0,1,4]$, $[2,1,1,0,1,1,4]$\\
\Xcline{1-1}{0.65pt}		
		$16648^*$ & $[2,1,-2,1,1,1,0]$ &   $[2,0,2,1,0,2,2]$, $[1,0,3,0,1,2,1]$,\\
		$\textbf{16647}^*$ &$[1,-2,-3,4,-1,2,-1]$ &  $[1,1,1,1,1,1,3]$, $[0,1,2,0,2,1,2]$\\
\Xcline{1-1}{0.65pt}		
		$15450^*$ & $[2,-1,-4,5,0,2,-1]$ &   $[0,1,0,1,0,0,8]$, $[1,0,0,2,0,0,7]$,\\
		$\textbf{15446}^*$ &$[\frac{3}{2},-\frac{3}{2},-\frac{7}{2},\frac{7}{2},0,\frac{3}{2},-\frac{3}{2}]$ &   $[0,2,0,0,1,0,7]$, $[1,1,0,1,1,0,6]$\\
\Xcline{1-1}{0.65pt}		
		$15187$ & $[4,-2,-2,4,-2,4,-2]$  & $[0,0,6,0,0,0,0]$, $[0,4,0,0,0,0,6]$, $[1,2,0,1,0,1,5]$,\\
		$\textbf{15186}$ &$[3,-2,-2,3,-2,3,-2]$ &   $[2,0,1,0,2,0,5]$, $[0,1,0,2,1,0,5]$,$[1,1,1,1,1,1,3]$,\\
		& & $[0,2,2,0,2,0,2]$\\
\Xcline{1-1}{0.65pt}	
        $11818$ & $[7,0,-4,2,0,2,0]$  & $[0,1,2,0,2,1,2]$, $[2,0,2,1,0,2,2]$,\\
	    $\textbf{11817}$ &$[\frac{9}{2},0,-\frac{7}{2},1,0,1,0]$ &   $[1,2,1,1,1,0,3]$, $[1,1,3,0,1,1,1]$\\
\Xcline{1-1}{0.65pt}
         $9650$ & $[1,-2,-1,4,-3,5,-1]$  & $[0,1,0,1,0,0,8]$ (multiplicity two) \\
         $\textbf{9648}$      &$[0,-2,-1,3,-3,4,-1]$ & \\	
\Xcline{1-1}{0.65pt}
	    $8645$ & $[5,0,-4,3,0,1,0]$  & $[0,0,1,0,0,0,9]$, $[2,0,1,0,2,0,5]$,\\
	    $\textbf{8641}$ &$[4,0,-4,2,0,0,0]$ &  $[4,0,3,0,0,0,3]$\\
\Xcline{1-1}{0.65pt}
	    $5879$ & $[2,0,-1,2,-1,2,-1]$ &  $[0,4,0,0,0,0,6]$, $[1,2,0,1,0,1,5]$, $[2,0,1,0,2,0,5]$,\\
	    $\textbf{5878}$ &$[1,-2,-2,3,-2,3,-2]$ &    $[0,1,0,2,1,0,5]$, $[1,1,1,1,1,1,3]$, $[0,2,2,0,2,0,2]$\\
\Xcline{1-1}{0.65pt}
	    $4832$ & $[1,0,0,1,1,0,-1]$ &   $[1,3,0,1,0,0,5]$, $[2,1,1,0,1,1,4]$,\\
	    $\textbf{4831}$ &$[1,-1,-1,2,-\frac{5}{2},\frac{7}{2},-\frac{5}{2}]$ &   $[0,2,0,2,0,1,4]$, $[1,2,1,1,1,0,3]$
	
	\end{tabular}
	\label{table-EV-1001010}
\end{table}

\begin{table}[H]
	\centering
	\caption{Infinitesimal character $[1,0,0,1,0,1,1]$}
	\begin{tabular}{lcc}
		$\# x$ & $\lambda$/$\nu$   & Spin LKTs   \\
		\hline
		$20925_{\clubsuit}$ & $[1,1,1,1,1,1,1]$ &    $[2,1,1,1,1,1,2]$\\ & $[1,0,0,1,0,1,1]$ &\\
		$20891_{\clubsuit}$ & $[0,0,0,4,0,0,3]$ &    $[2,1,1,1,1,1,2]$\\ & $[1,0,0,1,0,1,1]$& \\
		$20787_{\clubsuit}$ & $[4,-1,-1,4,-1,4,1]$  & $[1,1,2,0,2,1,1]$\\ & $[\frac{3}{2},-\frac{1}{2},-\frac{1}{2},\frac{3}{2},-\frac{1}{2},\frac{3}{2},1]$ &\\
		$20424_{\clubsuit}$ & $[3,0,0,1,0,3,1]$  & $[3,1,1,0,1,1,3]$\\& $[2,-\frac{1}{2},-\frac{1}{2},1,-\frac{1}{2},2,1]$& \\
\Xcline{1-1}{0.65pt}		
        $18488^*$ & $[2,1,-3,3,0,0,2]$  & $[1,1,1,0,1,1,5]$, $[0,1,1,1,1,1,4]$  \\
        $\textbf{18487}^*$ &$[1,-3,-3,4,0,1,1]$& \\
\Xcline{1-1}{0.65pt}		
		$16431^*$ & $[3,-6,1,9,-6,1,3]$  & $[1,1,1,1,2,2,0]$, $[0,2,2,1,1,1,1]$,\\
		&$[1,-\frac{7}{2},0,\frac{9}{2},-\frac{7}{2},1,1]$ &   $[2,1,0,2,1,2,1]$, $[1,2,1,2,0,1,2]$\\
\Xcline{1-1}{0.65pt}
		$13623^*$ & $[2,-1,-1,3,-1,1,1]$ &   $[0,1,2,1,1,2,1]$, $[1,1,1,2,0,2,2]$\\
        $\textbf{13622}^*$ &$[1,-4,-1,5,-3,1,1]$&\\
\Xcline{1-1}{0.65pt}		
		$13442$ & $[3,-6,-4,7,-4,9,1]$  & $[2,1,1,0,2,2,1]$, $[1,2,2,0,1,1,2]$\\ &$[1,-\frac{5}{2},-\frac{5}{2},\frac{7}{2},-\frac{5}{2},\frac{7}{2},1]$&\\
\Xcline{1-1}{0.65pt}
		$12480$ & $[-1,-1,0,3,-2,3,2]$  & $[1,1,2,0,1,2,2]$\\
        $\textbf{12479}$ &$[1,-2,-2,3,-3,4,1]$&\\
\Xcline{1-1}{0.65pt}		
		$12195^*$ & $[3,-3,-2,6,-3,2,1]$  & $[1,1,0,0,1,0,8]$, $[2,0,0,1,1,0,7]$\\
        $\textbf{12191}^*$ &$[\frac{3}{2},-\frac{7}{2},-\frac{3}{2},5,-\frac{7}{2},2,0]$&\\
\Xcline{1-1}{0.65pt}		
		$10855$ & $[1,-2,-1,4,-3,4,2]$  & $[0,1,0,1,1,0,7]$\\
        $\textbf{10853}$ &$[0,-2,-\frac{3}{2},\frac{7}{2},-\frac{7}{2},\frac{7}{2},\frac{3}{2}]$&\\
\Xcline{1-1}{0.65pt}
        $8176^*$ & $[1,-1,-3,6,-3,2,1]$  & $[0,2,2,1,1,0,3]$, $[1,1,3,1,0,1,2]$\\
$\textbf{8175}^*$ &$[1,-\frac{1}{2},-3,\frac{9}{2},-\frac{7}{2},1,1]$&\\
\Xcline{1-1}{0.65pt}

		$7776$ & $[3,-3,-2,5,-2,1,2]$  & $[3,1,0,1,1,1,4]$, $[4,1,1,1,0,1,3]$,\\
		&$[\frac{7}{2},-\frac{5}{2},-\frac{5}{2},\frac{7}{2},-\frac{5}{2},1,1]$ &  $[2,0,0,2,2,0,4]$, $[4,0,2,2,0,0,2]$\\

		$7038$ & $[4,-2,-3,4,-1,1,2]$ &   $[1,0,3,0,2,2,0]$, $[0,2,2,0,3,0,1]$\\
               &$[4,-2,-3,3,-2,1,1]$&\\
		$7037$ & $[4,-2,-3,4,-1,1,2]$  & $[3,0,0,1,2,0,5]$, $[5,0,2,1,0,0,3]$\\
               &$[4,-2,-3,3,-2,1,1]$&\\
		$7036$ & $[4,-2,-3,4,-1,1,2]$  & $[2,1,0,2,1,1,3]$, $[3,1,1,2,0,1,2]$,\\
		       &$[4,-2,-3,3,-2,1,1]$  &  \\

\Xcline{1-1}{0.65pt}		
        $7486^*$ & $[1,0,-2,5,-4,3,1]$ &   $[1,0,1,1,0,0,8]$, $[1,1,1,0,1,0,7]$\\
        $\textbf{7482}^*$ &$[1,0,-\frac{5}{2},4,-4,\frac{3}{2},1]$&\\
\Xcline{1-1}{0.65pt}		
		$3623$ & $[3,0,-2,3,-2,1,3]$  & ${\rm LKT}=[0,1,0,0,0,0,10]$\\
        $\textbf{3619}$ &$[\frac{5}{2},0,-\frac{5}{2},\frac{5}{2},-\frac{5}{2},0,\frac{5}{2}]$&\\
\Xcline{1-1}{0.65pt}		
		$2860$ & $[2,-1,-2,4,-2,1,2]$  & $[4,0,1,0,2,1,3]$, $[3,1,2,0,1,0,4]$,\\
		&$[1,-2,-2,3,-2,1,1]$ &  $[2,0,2,1,1,1,3]$, $[3,1,1,1,2,0,2]$   		
	\end{tabular}
	\label{table-EV-1001011}
\end{table}

\begin{table}[H]
	\centering
	\caption{Infinitesimal character $[1,0,0,1,1,0,1]$}
	\begin{tabular}{lcc}
		$\# x$ & $\lambda$/$\nu$ & Spin LKTs   \\
		\hline
		$11317$ & $[1,-2,-1,4,1,-3,5]$   & $[0,2,2,0,0,3,2]$,$[1,1,2,0,1,3,2]$\\
        $\textbf{11316}$ &$[1,-2,-2,3,1,-4,5]$&\\
\Xcline{1-1}{0.65pt}		
		$9656$ & $[1,-1,-1,3,1,-2,4]$  & $[0,0,1,0,2,0,7]$, $[0,1,0,1,1,1,7]$\\
        $\textbf{9654}$ & $[0,-2,-\frac{3}{2},\frac{7}{2},0,-\frac{7}{2},5]$ &
	\end{tabular}
	\label{table-EV-1001101}
\end{table}

\begin{table}[H]
	\centering
	\caption{Infinitesimal character $[1,0,0,1,1,1,1]$}
	\begin{tabular}{lcc}
		$\# x$ & $\lambda$/$\nu$ & Spin LKTs   \\
		\hline
		$18856$ & $[2,1,-1,2,-1,1,2]$   & $[0,0,0,2,0,5,0]$\\
        $\textbf{18855}$ & $[1,-4,-4,5,1,1,1]$&		
	\end{tabular}
	\label{table-EV-1001111}
\end{table}

\begin{table}[H]
	\centering
	\caption{Infinitesimal character $[1,0,1,1,0,1,0]$}
	\begin{tabular}{lcc}
		$\# x$ & $\lambda$/$\nu$ & Spin LKTs   \\
		\hline
		$20895_{\clubsuit}$ & $[1,0,1,2,0,2,0]$   & $[0,2,0,2,0,3,0]$\\
        $\textbf{20894}_{\clubsuit}$ & $[1,0,1,1,0,1,0]$&		
	\end{tabular}
	\label{table-EV-1011010}
\end{table}

\begin{table}[H]
	\centering
	\caption{Infinitesimal character $[1,0,1,1,0,1,1]$}
	\begin{tabular}{lcc}
		$\# x$ & $\lambda$/$\nu$ & Spin LKTs   \\
		\hline
		$16431$ & $[3,-5,1,8,-5,1,3]$   & $[0,0,2,3,2,0,0]$\\&$[1,-\frac{9}{2},1,\frac{11}{2},-\frac{9}{2},1,1]$&\\
		$13900$ & $[1,-1,1,4,-3,2,1]$   & $[6,0,0,1,0,0,6]$\\& $[0,-4,2,5,-5,2,0]$&		
	\end{tabular}
	\label{table-EV-1011011}
\end{table}

\begin{table}[H]
	\centering
	\caption{Infinitesimal character $[1,1,0,1,0,1,0]$}
	\begin{tabular}{lcc}
		$\# x$ & $\lambda$/$\nu$ & Spin LKTs   \\
		\hline
		$19205$ & $[3,3,-6,9,-6,9,-4]$   & $[0,1,2,1,2,1,0]$\\ &$[1,1,-\frac{5}{2},\frac{7}{2},-\frac{5}{2},\frac{7}{2},-\frac{5}{2}]$&\\
		$17644$ & $[1,1,-2,5,-2,3,-2]$   & $[4,1,0,1,0,1,4]$\\& $[0,0,-2,4,-2,3,-3]$&\\
		$5492$ & $[5,3,-2,1,-3,6,-3]$   & $[2,1,1,2,0,3,1]$, $[1,3,0,2,1,1,2]$\\&$[3,1,-2,1,-\frac{5}{2},\frac{7}{2},-\frac{5}{2}]$&\\
		$3923$ & $[3,1,0,1,-2,3,0]$   & $[3,2,0,0,1,2,4]$, $[4,2,1,0,0,2,3]$\\& $[3,0,-2,2,-3,3,-2]$&		
	\end{tabular}
	\label{table-EV-1101010}
\end{table}

\begin{table}[H]
	\centering
	\caption{Infinitesimal character $[1,1,0,1,0,1,1]$}
	\begin{tabular}{lcc}
		$\# x$ & $\lambda$/$\nu$  & Spin LKTs   \\
		\hline
		$20467$ & $[3,1,0,1,0,1,1]$   & $[0,0,0,2,0,3,0]$,$[0,2,0,0,0,5,0],$ $[0,1,0,2,0,4,0]$,\\
		$\textbf{20466}$ &$[3,1,-2,3,-2,1,1]$ & $[0,0,0,4,0,3,0],$ $[0,2,0,2,0,5,0]$\\
		\Xcline{1-1}{0.65pt}
		$10205$ & $[2,2,0,-1,1,2,1]$   & ${\rm LKT}=[0,0,0,4,0,3,0]$, $[0,1,1,3,0,3,1]$,\\
		$\textbf{10204}$ &$[5,1,-4,1,-2,3,1]$  &$[0,2,0,2,0,5,0]$\\
		\Xcline{1-1}{0.65pt}
		$9507$ & $[1,1,-1,3,-2,2,1]$   & ${\rm LKT}=[0,0,4,0,0,0,6]$, $[0,1,3,0,1,0,6]$,\\
		$\textbf{9506}$ &$[1,1,-4,5,-4,1,1]$ &$[0,0,6,0,0,0,4]$, $[0,2,2,0,2,0,6]$\\
		\Xcline{1-1}{0.65pt}
		$8741$ & $[4,2,-2,1,-1,3,1]$   & $[3,0,0,0,1,1,8]$, $[4,0,1,0,0,0,9]$,\\
		$\textbf{8737}$ &$[5,\frac{3}{2},-\frac{7}{2},0,-\frac{3}{2},\frac{7}{2},0]$  &$[2,0,1,0,2,0,9]$	
	\end{tabular}
	\label{table-EV-1101011}
\end{table}

\begin{table}[H]
	\centering
	\caption{Infinitesimal character $[1,1,0,1,1,0,1]$}
	\begin{tabular}{lcc}
		$\# x$ & $\lambda$/$\nu$ & Spin LKTs   \\
		\hline
		$9002$ & $[0,2,2,-1,1,1,0]$   & ${\rm LKT}=[0,0,0,4,0,4,0]$,$[0,0,1,4,0,3,1],$\\
		$\textbf{9001}$ &$[5,1,-4,1,1,-3,4]$  & $[0,1,0,3,0,5,0]$,$[0,1,1,3,0,4,1]$\\
\Xcline{1-1}{0.65pt}		
		$7588$ & $[4,2,-2,1,2,-2,3]$   & ${\rm LKT}=[4,0,0,0,0,2,8]$, $[3,0,0,0,1,2,8]$,\\
		$\textbf{7584}$ &$[5,\frac{3}{2},-\frac{7}{2},0,2,-\frac{7}{2},\frac{7}{2}]$  &$[4,0,1,0,0,1,9]$, $[3,0,1,0,1,1,9]$	
	\end{tabular}
	\label{table-EV-1101101}
\end{table}

\begin{table}[H]
	\centering
	\caption{Infinitesimal character $[1,1,0,1,1,1,1]$}
	\begin{tabular}{lcc}
		$\# x$ & $\lambda$/$\nu$ & Spin LKTs   \\
		\hline
		$14500$ & $[8,1,-5,1,3,1,3]$   & ${\rm LKT}=[0,0,0,7,0,0,0]$,$[0,0,1,6,1,0,0]$\\
        &$[\frac{15}{2},1,-\frac{13}{2},1,1,1,1]$&\\
        $11909$ & $[4,1,-3,2,1,2,1]$   & ${\rm LKT}=[8,0,0,0,0,0,8]$,$[8,0,0,1,0,0,8]$\\
        &$[7,1,-7,2,0,2,0]$&
	\end{tabular}
	\label{table-EV-1101111}
\end{table}

\begin{table}[H]
	\centering
	\caption{Infinitesimal character $[1,1,1,0,1,0,1]$}
	\begin{tabular}{lcc}
		$\# x$ & $\lambda$/$\nu$ & Spin LKTs   \\
		\hline
		$20925_{\clubsuit}^*$ & $[1,1,1,1,1,1,1]$   &$[0,2,0,2,0,2,0]$, $[0,3,0,1,0,3,0]$, \\
		&$[1,1,1,0,1,0,1]$ &$[0,2,0,3,0,2,0]$, $[0,3,0,2,0,3,0]$ \\
\Xcline{1-1}{0.65pt}
		$20771$ & $[1,2,0,-1,3,-1,2]$   & $[0,1,0,2,0,3,0]$,$[0,2,0,1,0,4,0],$\\
		$\textbf{20770}$ &$[1,1,2,-1,2,-1,1]$ & $[0,1,0,3,0,3,0]$,$[0,2,0,2,0,4,0]$\\
\Xcline{1-1}{0.65pt}
		
		$18012$ & $[3,8,1,-5,8,-5,3]$   & $[0,0,2,2,2,0,0]$, $[0,1,1,3,1,1,0]$,\\
		&$[1,\frac{9}{2},1,-\frac{7}{2},\frac{9}{2},-\frac{7}{2},1]$  &$[0,0,3,1,3,0,0]$, $[0,1,2,2,2,1,0]$\\

		$16062$ & $[1,3,1,-2,5,-2,1]$   & $[5,0,0,1,0,0,5]$, $[5,1,0,0,0,1,5]$,\\
		&$[0,4,2,-4,5,-3,0]$ &$[5,0,0,2,0,0,5]$, $[5,1,0,1,0,1,5]$\\

\Xcline{1-1}{0.65pt}
		$16308^*$ & $[2,2,1,0,1,-2,4]$   & $[0,0,2,0,0,3,4]$, $[0,0,3,0,0,2,5]$,\\
		$\textbf{16307}^*$ &$[1,1,1,0,1,-5,6]$ &$[0,1,1,0,1,3,4]$, $[0,1,2,0,1,2,5]$\\
\Xcline{1-1}{0.65pt}		

		$11170$ & $[3,1,1,-4,9,-6,7]$   & $[1,3,1,0,1,3,1]$, $[0,4,2,0,0,2,2]$,\\
		&$[1,1,1,-\frac{7}{2},\frac{9}{2},-\frac{7}{2},\frac{9}{2}]$ &$[2,2,0,0,2,4,0]$\\

\Xcline{1-1}{0.65pt}
		$10225$ & $[1,1,5,-2,1,-2,5]$   & $[0,0,1,3,0,3,1]$, $[0,1,1,2,0,4,1]$,\\
		$\textbf{10224}$ &$[1,1,4,-3,1,-3,4]$ &$[0,0,2,3,0,2,2]$, $[0,1,2,2,0,3,2]$\\
\Xcline{1-1}{0.65pt}
		$10213$ & $[-1,2,1,0,1,0,2]$   & $[0,3,2,0,0,3,2]$, $[1,2,1,0,1,4,1]$,\\
		$\textbf{10212}$ &$[1,1,1,-3,4,-4,5]$  &\\
\Xcline{1-1}{0.65pt}
$8753$ & $[2,2,3,-2,2,-2,3]$  & ${\rm LKT}=[3,0,0,0,0,2,7]$, $[2,0,0,0,1,2,7]$,\\
		$\textbf{8749}$ &$[\frac{3}{2},\frac{3}{2},\frac{7}{2},-\frac{7}{2},2,-\frac{7}{2},\frac{7}{2}]$ & $[3,0,1,0,0,1,8]$, $[2,0,1,0,1,1,8]$\\
\Xcline{1-1}{0.65pt}
		
		$8603$ & $[1,2,2,-2,3,-2,4]$   & ${\rm LKT}=[0,0,0,0,3,0,7]$, $[0,1,0,1,2,0,8]$\\
$\textbf{8601}$&$[0,\frac{3}{2},2,-\frac{7}{2},\frac{7}{2},-\frac{7}{2},5]$&\\
\Xcline{1-1}{0.65pt}		
		$6066$ & $[1,-3,2,0,2,1,0]$   & ${\rm LKT}=[0,0,3,0,0,1,7]$, $[1,0,2,0,0,2,7]$,\\
		$\textbf{6065}$ &$[1,\frac{9}{2},1,-\frac{7}{2},1,-\frac{3}{2},\frac{5}{2}]$  &$[0,1,2,1,0,0,8]$, $[1,1,1,1,0,1,8]$\\
\Xcline{1-1}{0.65pt}		
		$5421$ & $[2,4,2,-3,2,-1,3]$   & ${\rm LKT}=[4,0,0,0,4,2,0]$, $[3,0,0,1,3,3,0]$,\\
		$\textbf{5420}$ &$[1,4,\frac{3}{2},-4,\frac{3}{2},-\frac{3}{2},\frac{5}{2}]$  &$[4,0,1,0,4,1,1]$, $[3,0,1,1,3,2,1]$\\
\Xcline{1-1}{0.65pt}		
		$4009$ & $[1,1,2,-2,4,-2,2]$   & {\rm LKT}=$[0,3,0,0,3,0,5]$, $[0,4,0,0,2,0,6]$,\\
		$\textbf{4008}$ &$[1,1,1,-3,4,-3,1]$ &$[0,3,1,0,3,0,4]$, $[1,2,0,1,2,1,5]$
	\end{tabular}
	\label{table-EV-1110101}
\end{table}

\begin{table}[H]
	\centering
	\caption{Infinitesimal character $[1,1,1,0,1,1,1]$}
	\begin{tabular}{lcc}
		$\# x$ & $\lambda$/$\nu$ & Spin LKTs   \\
		\hline
		$20925_{\clubsuit}^*$ & $[1,1,1,1,1,1,1]$   &$[0,0,0,3,0,0,0]+n\beta$, $0\leq n\leq 3$ \\
		&$[1,1,1,0,1,1,1]$ & \\
\Xcline{1-1}{0.65pt}
		$18324$ & $[1,1,1,-2,5,1,1]$   & $[0,0,0,0,0,6,0]+n\beta$, $0\leq n\leq 3$;\\
		$\textbf{18323}$ &$[1,1,1,-5,6,1,1]$ & ${\rm LKT}=[0,0,0,0,0,6,0]$\\
\Xcline{1-1}{0.65pt}
		
		$15505$ & $[1,1,5,-2,1,1,1]$   & ${\rm LKT}=[0,0,0,6,0,0,0]$, \\
		&$[1,1,\frac{13}{2},-\frac{11}{2},1,1,1]$  &$[0,0,1,5,1,0,0]$, $[0,0,2,4,2,0,0]$\\
		$12885$ & $[1,1,3,-1,1,1,1]$   & $[7,0,0,0,0,0,7]+n\beta$, $0\leq n\leq 2$;\\
		&$[0,1,7,-5,0,2,0]$  & ${\rm LKT}=[7,0,0,0,0,0,7]$\\
\Xcline{1-1}{0.65pt}
		$8375$ & $[1,4,1,-2,2,1,1]$   & ${\rm LKT}=[0,0,5,0,0,0,7]$, \\
		$\textbf{8374}$ &$[1,6,1,-5,1,1,1]$ &$[0,0,6,0,0,0,6]$, $[0,1,4,0,1,0,7]$
	\end{tabular}
	\label{table-EV-1110111}
\end{table}

\begin{table}[H]
	\centering
	\caption{Infinitesimal character $[1,1,1,1,0,1,0]$}
	\begin{tabular}{lcc}
		$\# x$ & $\lambda$/$\nu$ & Spin LKTs   \\
		\hline
		$8819$ & $[1,2,1,1,-2,4,-1]$   & $[1,4,0,0,1,5,0]$,$[0,5,1,0,0,4,1]$\\&$[1,1,1,1,-\frac{9}{2},\frac{11}{2},-\frac{9}{2}]$&
	\end{tabular}
	\label{table-EV-1111010}
\end{table}

\begin{table}[H]
	\centering
	\caption{Infinitesimal character $[1,1,1,1,0,1,1]$}
	\begin{tabular}{lcc}
		$\# x$ & $\lambda$/$\nu$ & Spin LKTs   \\
		\hline
		$17619$ & $[1,1,2,-1,2,0,1]$   & $[0,0,0,0,0,7,0]+n\beta$, $0\leq n\leq 2$;\\
		$\textbf{17618}$ &$[1,1,1,1,-6,7,1]$ & ${\rm LKT}=[0,0,0,0,0,7,0]$
	\end{tabular}
	\label{table-EV-1111011}
\end{table}

\begin{table}[H]
	\centering
	\caption{Infinitesimal character $[1,1,1,1,1,0,1]$}
	\begin{tabular}{lcc}
		$\# x$ & $\lambda$/$\nu$ & Spin LKTs   \\
		\hline
		$16791$ & $[2,1,1,1,1,-1,3]$   & ${\rm LKT}=[0,0,0,0,0,8,0]$,\\
        $\textbf{16790}$&$[1,1,1,1,1,-7,8]$& $[0,0,0,1,0,8,0]$
	\end{tabular}
	\label{table-EV-1111101}
\end{table}

\begin{table}[H]
\centering
\caption{Infinitesimal character $[1,1,1,1,1,1,1]$}
\begin{tabular}{lcc}
$\# x$ & $\lambda$/$\nu$ & Spin LKT   \\
\hline	
$20925_{\clubsuit}$ & $[1,1,1,1,1,1,1]$   & ${\rm LKT}=[0,0,0,0,0,0,0]$\\
                    & $[1,1,1,1,1,1,1]$
\end{tabular}
\label{table-EV-1111111}
\end{table}

\centerline{\scshape Acknowledgements}
We are deeply grateful to the \texttt{atlas} mathematicians. We thank an anonymous referee sincerely for giving us suggestions.

\centerline{\scshape Funding}
Dong is supported by the National Natural Science Foundation of China (grant 12171344).

\end{document}